\newtheorem{thm}{Theorem}
\theoremstyle{definition}
\newtheorem{crl}{Corollary}
\newtheorem{prp}{Proposition}
\newtheorem{cnj}{Conjecture}
\newcommand{\Z}{\mathbb{Z}}
\newcommand{\R}{\mathbb{R}}
\newcommand{\RP}{\mathbb{R}\mathbb{P}}
\author{Cole Hugelmeyer} 
\title{Inscribed Trefoil Knots}
\begin{document} 
\maketitle 
\begin{abstract}
Let $K$ be a knot type for which the quadratic term of the Conway polynomial is nontrivial, and let $\gamma: \R\to \R^3$ be an analytic $\Z$-periodic function with non-vanishing derivative which parameterizes a knot of type $K$ in space. We prove that there exists a sequence of numbers $0\leq t_1 < t_2 < ... < t_6 < 1$ so that the polygonal path obtained by cyclically connecting the points $\gamma(t_1), \gamma(t_2), ..., \gamma(t_6)$ by line segments is a trefoil knot. 
\end{abstract}

\section{Introduction}
We investigate the existence of piecewise linear trefoils inscribed in a knot. In particular, we consider the problem of finding six points on a given knot which form a Trefoil knot when cyclically connected by straight line segments in the same order in which they appear. Our main result is as follows.

\begin{thm}
Let $K$ be a knot type for which the quadratic term of the Conway polynomial is nontrivial, and let $\gamma: \R\to \R^3$ be an analytic $\Z$-periodic function with non-vanishing derivative which parameterizes a knot of type $K$ in space. Then there exists a sequence of numbers $0\leq t_1 < t_2 < ... < t_6 < 1$ so that the polygonal path obtained by cyclically connecting the points $\gamma(t_1), \gamma(t_2), ..., \gamma(t_6)$ by line segments is a trefoil knot. 
\end{thm}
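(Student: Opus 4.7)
The strategy is to parametrize the hexagons inscribed in $\gamma$ and to define a signed count of them whose value is controlled by $a_2(\gamma)$. The decisive preliminary observation is that the stick number of the trefoil is $6$: any simple closed hexagon in $\R^3$ is either an unknot or a trefoil of one of the two chiralities. Hence, if we can show that at least one inscribed (simple) hexagon is knotted, we are done automatically.

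I would set up the configuration space $X$ of ordered, cyclically arranged $6$-tuples $(t_1<t_2<\dots<t_6)\in(\R/\Z)^6$ and consider the hexagon map sending such a tuple to the piecewise linear loop $P(t_1,\dots,t_6)$. The discriminant $D\subset X$ (tuples producing non-simple hexagons) is a codimension-$1$ stratified subset; crossing a smooth point of $D$ transversely changes the hexagon's knot type by a single crossing change, which alters $a_2$ of the hexagon by $\pm 1$, with the sign determined by the local orientation of the colliding edges. On $X\setminus D$ the knot type is locally constant, with value in $\{\text{unknot},\,T_+,\,T_-\}$.

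The heart of the proof would be the construction of a signed count (or closed integrand) on $X$ whose total value is forced to equal a nonzero multiple of $a_2(\gamma)$. A natural candidate exploits the fact that a stick hexagon is a trefoil precisely when its two inscribed triangles $\gamma(t_1)\gamma(t_3)\gamma(t_5)$ and $\gamma(t_2)\gamma(t_4)\gamma(t_6)$ have odd linking number, so one integrates a Gauss linking form between these two triangles over $X$. A Polyak--Viro style identity should then express $a_2(\gamma)$, up to a universal nonzero factor, as this bulk integral over $6$-tuples. If no inscribed hexagon were a trefoil, the integrand would vanish on $X\setminus D$ and the resulting count would be zero, contradicting the hypothesis $a_2(\gamma)\neq 0$.

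The principal obstacle is the cohomological accounting that identifies this signed count with $a_2(\gamma)$. One must analyze how the integrand degenerates along the boundary strata where consecutive parameters collide (so that edges shrink) and along $D$ itself, and supply the boundary correction terms standard in degree-$2$ finite-type-invariant theory. A secondary technical issue is sign-bookkeeping: one must verify that genuine trefoil hexagons contribute with a consistent, nonvanishing sign, so that the count cannot be nonzero through spurious cancellations between the two trefoil chiralities and degenerate configurations.
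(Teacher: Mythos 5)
Your high-level strategy --- a signed count of inscribed hexagon configurations forced to be a nonzero multiple of the quadratic Conway coefficient --- is indeed the same skeleton as the paper's argument, but the step you defer as ``the principal obstacle'' is essentially the entire theorem, and the concrete mechanism you propose for it does not work as stated. Integrating the Gauss linking form of the triangles $\gamma(t_1)\gamma(t_3)\gamma(t_5)$ and $\gamma(t_2)\gamma(t_4)\gamma(t_6)$ over the full $6$-dimensional space of $6$-tuples produces a bulk integral of an integer-valued, locally constant function; such a quantity depends on the parameterization and has no built-in invariance under isotopy, and it does not match any known Polyak--Viro or Bott--Taubes type formula for the degree-two invariant (those involve $4$-point configuration integrals with correction terms, not $6$-point integrals of triangle linking numbers). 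No argument is offered for why this number is finite, invariant, or equal to a universal nonzero multiple of $a_2$, and the final contradiction rests entirely on that unproven identity. The paper's replacement for this step is quite different in nature: it defines the submanifold $M\subseteq C_6(S^3)$ of $6$-tuples whose three main diagonals $x_1x_4$, $x_2x_5$, $x_3x_6$ are concurrent in $\RP^4$ (inscribed triangular prisms, up to stereographic projection), proves $M$ and $Q_\gamma$ are properly embedded and that a thickness estimate confines $M\cap Q_\gamma$ to a compact region, so that the oriented intersection class $[M\cap Q_\gamma]\in H_1(Q_\gamma)$ is well defined and isotopy invariant, and then identifies it as a degree-two Vassiliev invariant vanishing on the unknot and nonvanishing on the trefoil, hence a nonzero multiple of the quadratic Conway coefficient.

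There is also a second substantial component of the paper that your proposal would still need if the counting step were repaired homologically rather than measure-theoretically: the configurations detected by the count need not be honest trefoils. The paper perturbs $M$ to $M'$ so that nondegenerate points of $M'$ are ``stereographic trefoils,'' but the intersection with $Q_\gamma$ can a priori consist of coplanar, cocircular, or even collinear $6$-tuples, and ruling these out (or perturbing them into genuine inscribed trefoils) occupies Section 4, using analyticity in an essential way: Taylor-expansion angle estimates at tangencies, the fact that an analytic curve meets a plane in finitely many points, a Cantor--Bendixson derivative argument to strip away isolated degenerate configurations while preserving an essential loop in \v{C}ech cohomology, and explicit sphere- and normal-vector perturbations in the coplanar and cocircular cases. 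Your proposal sidesteps this only by routing everything through the unproven integral identity, so the gap is twofold: the identification of the count with $a_2$ is missing, and so is the degeneration analysis that converts the output of the count into an actual inscribed trefoil. (Your observation that every simple hexagon is an unknot or a trefoil is correct, but the paper never needs it, since $M'$ is built directly out of trefoil configurations.)
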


For the proof, we will begin by defining a submanifold of configuration space which lies inside the closure of the set of 6-tuples which form trefoils. Then, we will use some intersection theory to prove that if the quadratic term of the Conway polynomial of our knot is nontrivial then a 1-parameter family of 6-tuples of points on our knot lies in this submanifold. Finally, we will use some geometric arguments relying on the analyticity of the parameterization to show that we can perturb one of those 6-tuples to make a trefoil. 

It appears quite challenging to remove the requirement that the curve be analytic. The same sort of difficulties that arise in removing regularity requirements in the Toeplitz inscribed square problem also arise in the problem of inscribed trefoils. A simple limiting argument fails to work because the trefoils cannot be guaranteed not to degenerate to planar configurations, much like how when one approximates a Jordan curve by smoothings, the inscribed squares cannot be guaranteed not to shrink to zero. In order to relax the regularity requirements in these kind of problems, we likely need new geometric insights. \cite{survey}

\section{A Submanifold of Configuration Space}

If $X$ is a manifold, then $C_n(X)$ will denote the space of all n-tuples of distinct points of $X$. This is called the n-th (labeled) configuration space of $X$. In this section we will construct a submanifold of $C_6(S^3)$ with some interesting properties.

Consider a small geometrically spherical 4-ball in real projective 4-space, $B^4\subseteq \RP^4$.  Given a point $p$ not inside of $B^4$, we can consider the lines in $\RP^4$ that pass through $p$ as well as some part of $B^4$. These lines either intersect $\partial B^4$ at two distinct points, or they lie tangent to $\partial B^4$, intersecting it at exactly one point. For fixed $p$, let $T_p$ be the set of points in $\partial B^4$ at which some line passing through $p$ and tangent to $\partial B^4$ intersects $\partial B^4$. The set $T_p$ is a 2-sphere that divides $\partial B^4$ into two regions. Furthermore, for any line that passes through $p$ and intersects $\partial B^4$ at two distinct points, the two intersection points will lie on different sides of $T_p$. This means that if $\ell_1,\ell_2,\ell_3$ are three distinct lines that each pass through $p$ and each intersect $\partial B^4$ at two points, then there is a well-defined partition of the six intersection points into two groups of three which is determined by grouping together intersection points that lie on the same side of $T_p$. Let us define a graph with the six intersection points as its vertices and and edge between two vertices if they are on the same side of $T_p$ or they lie on the same line from the set $\{\ell_1,\ell_2,\ell_3\}$. We will call this graph $G(\ell_1,\ell_2,\ell_3)$. Regardless of which three lines we pick, this graph will be isomorphic to the edge graph of a triangular prism. One interesting property of this graph is that its compliment graph is cyclic. Therefore, if we define $\text{Cyc}_6$ to be the graph with vertices $\{1,...,6\}$ and edges between any two numbers that differ by 1 mod 6, then we may select a graph isomorphism from the compliment graph $G^c(\ell_1\ell_2\ell_3)$ to the fixed cyclic graph $ \text{Cyc}_6$. We now define $M$ to be the moduli space for the following data: 
\begin{itemize}
\item[1)] A point $p\in \RP^4\setminus B^4$.
\item[2)] An \emph{unordered} triple of distinct lines $\{\ell_1,\ell_2,\ell_3\}$ that each pass through $p$ and each intersect the 3-sphere $\partial B^4$ at exactly two points.
\item[3)] A specified graph isomorphism $G^c(\ell_1,\ell_2,\ell_3)\to \text{Cyc}_6$.
\end{itemize}

\begin{figure}[h]
\caption{A representative for a point in $M$. }
\centering
\includegraphics[scale = 0.75]{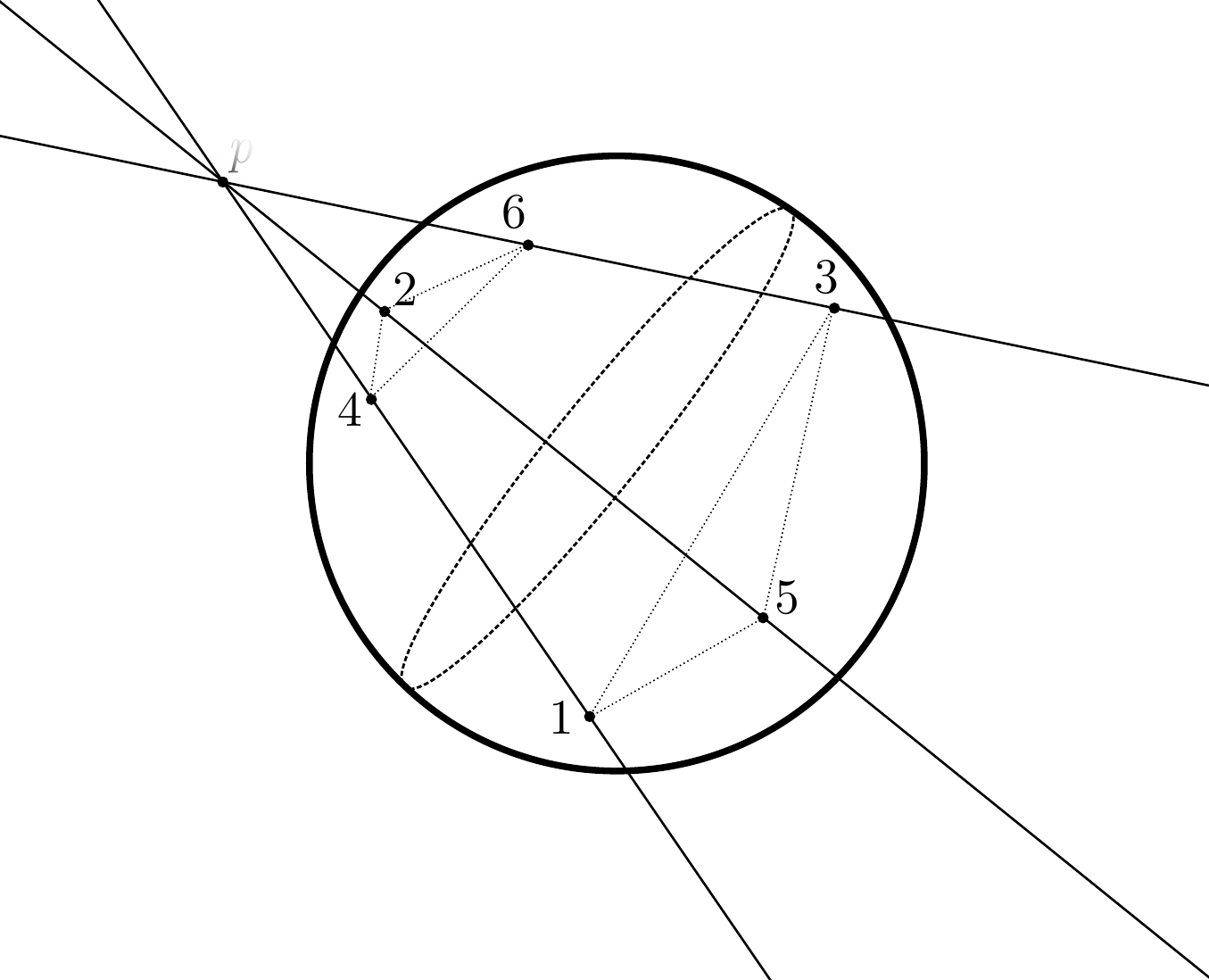}
\end{figure}

Alternatively, we could have simply defined $M$ as the subset of $C_6(S^3)$ consisting of $6$-tuples $(x_1,...,x_6)$ so that (when we identify $S^3$ with $\partial B^4\subseteq \RP^4$) the lines going through the pairs of points $(x_1,x_4)$, $(x_2,x_5)$, and $(x_3,x_6)$ all meet at a single point in $\RP^4$. This makes it clear that $M$ admits an embedding into $C_6(S^3)$ by taking a point in $M$ to the 6-tuple of intersection points ordered via the labeling induced by the map to $\text{Cyc}_6$. It should be noted that when stereographically projected to $\R^3$, an element of $M$ generically consists of the set of vertices for a triangular prism inscribed in a sphere.

\begin{prp}\label{diffeoM}
The manifold $M$ is diffeomorphic to $\R\times S^3 \times C_3(\R^3)$.
\end{prp}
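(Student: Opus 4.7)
My plan is to exhibit $M$ as a fiber bundle over a natural $2$-fold cover of $U := \RP^4 \setminus B^4$, show this cover is diffeomorphic to $\R \times S^3$, and then trivialize the bundle to recover the $C_3(\R^3)$ factor.

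First, I would unpack the data of an isomorphism $\phi : G^c(\ell_1,\ell_2,\ell_3) \to \text{Cyc}_6$. Since consecutive vertices of $\text{Cyc}_6$ are non-adjacent in $G$, they must lie on different sides of $T_p$ and on different lines; this forces the labels $1, 3, 5$ to appear on one side of $T_p$ and $2, 4, 6$ on the other, with $\ell_i = \overline{x_i x_{i+3}}$. Hence specifying $\phi$ is equivalent to specifying an ordering of the unordered triple $\{\ell_1, \ell_2, \ell_3\}$ together with a choice of one of the two sides of $T_p$.

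Next I would identify pairs (point $p \in U$, side of $T_p$) with a $2$-fold cover of $U$. Lifting via the universal cover $S^4 \to \RP^4$, the ball $B^4$ pulls back to two disjoint antipodal balls $\tilde B_\pm \subset S^4$, and $\tilde U := S^4 \setminus (\tilde B_+ \cup \tilde B_-)$ is a $2$-fold cover of $U$. A lift $\tilde p \in \tilde U$ of $p \in U$ singles out the ``near'' side of $T_p$, namely the endpoints on $\partial B^4$ reachable from $\tilde p$ by short great-circle arcs in $S^4$ not crossing $\tilde B_-$. Foliating $S^4 \setminus (\tilde B_+ \cup \tilde B_-)$ by the $3$-spheres orthogonal to the axis through the two ball centers gives an explicit diffeomorphism $\tilde U \cong \R \times S^3$.

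Combining these, $M$ maps smoothly to $\tilde U$ by sending $(p, \{\ell_i\}, \phi)$ to the pair (concurrency point, chosen side). The fiber over $(\tilde p, s) \in \tilde U$ consists of ordered triples of distinct lines through $p$ each meeting $\partial B^4$ in a chord; since each such line has a unique endpoint on the side $s$ (an open $3$-disk in $\partial B^4$), the fiber identifies with $C_3(s) \cong C_3(\R^3)$. To conclude, I would trivialize this bundle: the near ball $\tilde B_+$ lifts $s$ to an open half-sphere $s^+ \subset \partial \tilde B_+$ whose ``north pole'' (the point of $\partial \tilde B_+$ closest to $\tilde p$) varies smoothly with $\tilde p$, and stereographic projection from the antipodal south pole yields a smoothly varying family of diffeomorphisms $s \to \R^3$. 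Assembling these over $\tilde U$ produces the desired global diffeomorphism $M \cong \R \times S^3 \times C_3(\R^3)$.

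The step I expect to be the main obstacle is this last trivialization: verifying that the pole-based stereographic identifications depend smoothly on $\tilde p$ across all of $\tilde U$ and genuinely assemble into a bundle isomorphism. Since $\tilde U$ is simply connected with vanishing $\pi_2$, obstruction theory guarantees such a trivialization exists; the work is in making the geometric construction explicit enough for smoothness to be transparent.
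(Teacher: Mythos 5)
Your proposal is correct and follows the same overall decomposition as the paper: both of you reduce the graph-isomorphism datum to an ordering of the three lines together with a choice of side of $T_p$, both identify the side-choice as producing the connected double cover of $\RP^4\setminus B^4$ (which is $\R\times S^3$), and both identify the ordered-triple-of-chord-lines datum with $C_3(\R^3)$. Where you differ is in how the two technical steps are carried out. The paper first trivializes the chord-line spaces over the base, invoking triviality of $T\RP^3$ to choose diffeomorphisms from $\R^3$ to the space of chord lines through $p$ continuously in $p$; this exhibits $M$ as a double cover of $(\RP^4\setminus B^4)\times C_3(\R^3)$, and the proof is finished by checking that the generator of $\pi_1(\RP^4\setminus B^4)$ acts nontrivially (the $180^\circ$ rotation reverses the cyclic orientation), so the cover is the connected one. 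You instead build the double cover concretely as $S^4$ minus the two antipodal lifts of $B^4$, observing that a lift $\tilde p$ canonically selects a side of $T_p$, and then trivialize the resulting bundle of configuration spaces of caps by hand, via the chord-to-near-endpoint bijection and a $\tilde p$-dependent stereographic projection. Your route makes the $\R\times S^3$ factor completely explicit and avoids the $T\RP^3$ input; the paper's route avoids having to trivialize any bundle over the cover, at the cost of the separate monodromy check. Both are legitimate.

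Three small points to tighten. First, ``reachable by a short arc not crossing $\tilde B_-$'' does not by itself single out one endpoint of a chord, since the exit point is also reachable from $\tilde p$ without meeting $\tilde B_-$; what you want is the unique endpoint on $\partial\tilde B_+$ reachable by a great-circle arc avoiding \emph{both} balls (equivalently, the first hit of $\partial\tilde B_+$ from $\tilde p$). Second, the chosen side $s$ is a spherical cap (not a half-sphere), and its stereographic image from the opposite pole is an open round ball of $\tilde p$-dependent radius rather than all of $\R^3$; compose with a smooth rescaling to land in $\R^3$. Third, the obstruction-theory fallback is misstated: triviality of a bundle over a space homotopy equivalent to $S^3$ is governed by $\pi_2$ of the structure group (which vanishes once the structure group is reduced to a Lie group), not by the vanishing of $\pi_1$ and $\pi_2$ of the base. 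None of these affects the correctness of your explicit construction, which is what carries the proof.
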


\begin{proof}
The tangent bundle of $\RP^3$ is trivial, so there is a choice of diffeomorphism, continuously depending on $p\in \RP^4\setminus B^4$, from $\R^3$ to the space of lines that pass through $p$ and intersect $\partial B^4$ at exactly two points. Furthermore, the data of a specified graph isomorphism $G^c(\ell_1,\ell_2,\ell_3)\to \text{Cyc}_6$ is equivalent to a specified ordering of the three lines along with a specified side of the partition determined by $T_p$. To see why this equivalence holds, we see that the ordering of $\ell_1,\ell_2,\ell_3$ allows us to determine an orientation for the graph $G^c(\ell_1,\ell_2,\ell_3)$, and the specified side of the partition determined by $T_p$ allows us to select a single base point out of the six intersection points by choosing the intersection point of the first line which lies in the specified side of the partition. The base point along with the orientation defines an isomorphism. Now, we have enough information to say that $M$ is diffeomorphic to a double cover for $(\RP^4\setminus B^4 )\times C_3(\R^3)$. Therefore, to complete the proof we simply need to check that $\Z/2 \simeq \pi_1(\RP^4\setminus B^4)$ acts nontrivially on the orientation of $G^c(\ell_1,\ell_2,\ell_3)$ determined by the specified isomorphism to $\text{Cyc}_6$. This is straightforward to check, as this $\Z/2$ action corresponds to a $180$ degree rotation of our configuration of points in $S^3$, and this reverses the orientation of the cyclic order.
\end{proof}

\begin{crl}
The manifold $M$ is $13$ dimensional and orientable.
\end{crl}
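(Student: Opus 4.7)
The plan is to read off both statements directly from Proposition \ref{diffeoM}, which identifies $M$ with the product manifold $\R \times S^3 \times C_3(\R^3)$.

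For the dimension count, I would simply add: $\R$ contributes $1$, $S^3$ contributes $3$, and $C_3(\R^3)$ is an open subset of $(\R^3)^3$ and thus has dimension $9$. Summing gives $1 + 3 + 9 = 13$, which is the dimension of $M$ since diffeomorphic manifolds have equal dimensions.

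For orientability, I would note that each factor is orientable: $\R$ and $S^3$ are standard, and $C_3(\R^3)$ is an open subset of $\R^9$, hence inherits an orientation. A finite product of orientable manifolds is orientable (the product of local orientations gives a global orientation), and orientability is a diffeomorphism invariant, so $M$ is orientable.

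There is no real obstacle here; the corollary is a direct bookkeeping consequence of the diffeomorphism established in Proposition \ref{diffeoM}, and the only thing to verify is the dimension of $C_3(\R^3)$, which follows immediately from its definition as the open subset of $(\R^3)^3$ consisting of triples of distinct points.
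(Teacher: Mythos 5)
Your proof is correct and is essentially the paper's argument: the corollary is read off directly from the diffeomorphism $M \cong \R\times S^3\times C_3(\R^3)$ of Proposition \ref{diffeoM}, with you merely filling in the routine dimension count and the orientability of each factor (using that $C_3(\R^3)$ is open in $\R^9$), which the paper leaves implicit.
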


\begin{proof}
$\R\times S^3 \times C_3(\R^3)$ is $13$ dimensional and orientable.
\end{proof}

\begin{prp}\label{properM}
The manifold $M$ is properly embedded in $C_6(S^3)$.
\end{prp}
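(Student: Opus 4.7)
The plan is to show that the image of $M$ in $C_6(S^3)$ is a closed subset; combined with the smooth injective immersion already described above, this will yield a proper embedding. Concretely, I would take an arbitrary sequence $\mathbf{x}^{(n)} = (x_1^{(n)}, \ldots, x_6^{(n)})$ of points in $M$ which converges in $C_6(S^3)$ to some $\mathbf{y} = (y_1, \ldots, y_6)$; by definition of configuration space the $y_i$ are pairwise distinct, and the task reduces to recovering a point of $M$ that represents $\mathbf{y}$.

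The first step is a compactness argument. For each $n$ let $p^{(n)} \in \RP^4 \setminus B^4$ be the common intersection point of the three chord lines $\ell_i^{(n)} = \overline{x_i^{(n)} x_{i+3}^{(n)}}$, $i=1,2,3$. Because $\RP^4$ is compact and $\RP^4 \setminus B^4$ is closed in it, a subsequence satisfies $p^{(n)} \to p$ with $p \in \RP^4 \setminus B^4$. Continuity of the projective-span operation then gives $\ell_i^{(n)} \to \ell_i := \overline{y_i y_{i+3}}$, and each of these limit lines contains $p$.

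The heart of the argument, and the only place where genuine geometric input is needed, is to rule out any degeneracy in the limit. The three lines $\ell_1, \ell_2, \ell_3$ must be pairwise distinct, for if two of them coincided then four of the distinct points $y_j$ would all lie on a single projective line that also meets $\partial B^4$, violating the fact that a projective line in $\RP^4$ meets the round $3$-sphere $\partial B^4$ in at most two points. The same intersection bound forces $p$ to lie strictly outside $\overline{B^4}$: otherwise $p \in \partial B^4$ would be one of the two intersection points of each non-tangent line $\ell_i$ with $\partial B^4$, and hence would have to coincide with three of the $y_j$, contradicting their distinctness. Thus $(p, \{\ell_1, \ell_2, \ell_3\})$ already satisfies conditions (1) and (2) of the definition of $M$.

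It remains to transfer the discrete isomorphism datum in (3) to the limit. Since the tangent sphere $T_p$ varies continuously with $p$, and since each $y_i$ lies strictly off $T_p$ (the chord $\ell_i$ being transverse to $\partial B^4$ at $y_i$), for $n$ large the side of the partition of $\partial B^4 \setminus T_{p^{(n)}}$ containing $x_i^{(n)}$ agrees with the side of $\partial B^4 \setminus T_p$ containing $y_i$. The finite-discrete isomorphism $G^c(\ell_1^{(n)},\ell_2^{(n)},\ell_3^{(n)}) \to \mathrm{Cyc}_6$ therefore stabilizes and extends continuously to the limit, showing $\mathbf{y} \in M$. The main obstacle I anticipate is a runaway limit in which $p^{(n)}$ drifts toward $\partial B^4$ while the chord configuration degenerates; everything interesting in the proof is devoted to excluding this, and the exclusion follows cleanly from the algebraic bound on line--sphere intersections together with the pairwise distinctness of the $y_j$.
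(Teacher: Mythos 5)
Your proposal is correct in substance but takes a genuinely different route from the paper. The paper argues through the diffeomorphism $M \cong \R\times S^3\times C_3(\R^3)$ of Proposition \ref{diffeoM}: it introduces three exhaustion functions $f_1,f_2,f_3$ and shows that along any sequence leaving every compact subset of $M$ one of them blows up, which forces two of the six configuration points to collide, so the image sequence has no limit in $C_6(S^3)$. You argue in the reverse direction: starting from a sequence of configurations converging in $C_6(S^3)$, you extract a convergent subsequence of the moduli data $(p,\{\ell_1,\ell_2,\ell_3\},\text{iso})$, and the bound that a projective line meets the round sphere $\partial B^4$ in at most two points, combined with the pairwise distinctness of the limit points $y_j$, cleanly excludes every degeneration ($p\in\partial B^4$, coinciding chords, tangent chords), while the discrete labeling stabilizes because no $y_i$ lies on $T_p$. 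This buys you independence from the coordinates of Proposition \ref{diffeoM}, whereas the paper's version gets the compactness bookkeeping for free from its explicit product structure. One caveat about your framing: ``injective immersion with closed image'' does not in general imply a proper embedding (an injective immersion can limit onto a point of its own closed image without being proper), so the reduction announced in your opening sentence is not a valid general principle. This is only a packaging issue here: your limit argument actually shows that any sequence in $M$ whose image converges in $C_6(S^3)$ has a subsequence converging in $M$ itself, which directly gives that preimages of compact sets are compact; alternatively, observe that $M\to C_6(S^3)$ is a homeomorphism onto its image and then closedness of the image does suffice. State one of these explicitly and the proof is complete.
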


\begin{proof}
We can define three functions $M\to \R_{\geq 0}$ via the diffeomorphism to $ \R\times S^3 \times C_3(\R^3)$ that was constructed in the proof of Proposition \ref{diffeoM}. The first function, which we will call $f_1$, will take a point to the absolute value of the $\R $ component of that point. The second function $f_2$ will take a point in $M$ to the reciprocal of the minimum distance between two of the three points in $\R^3$ from the $C_3(\R^3)$ component. The third function $f_3$ will take a point in $M$ to the maximum distance from zero of the three points in $\R^3$ from the $C_3(\R^3)$ component. These functions have the property that any subset of $M$ on which all of these functions are bounded has compact closure. Therefore, we see that if $q_1,q_2,...$ is a sequence of points in $M$ that eventually leaves every compact set, then this sequence has a subsequence $q_{n_1}, q_{n_2}, ...$ such that for some $i\in \{1,2,3\}$, the sequence $f_i(q_{n_1}), f_i(q_{n_2}), ...$ goes to $+\infty$. Therefore, to show that $M$ is properly embedded in $C_6(S^3)$, it suffices to prove that if $i\in \{1,2,3\}$ and $q_1,q_2,...$ is a sequence of points in $M$ with $f_i(q_1), f_i(q_2),...$ going to $+\infty$, then $q_1,q_2,...$ does not have a limit in $C_6(S^3)$. For $i = 1$, such a sequence will have $p$ approaching $\partial B^4$ which will cause three of the intersection points to become arbitrarily close to each other. For $i = 2$, the minimum distance between a pair of intersection points will go to zero because two lines will get arbitrarily close. For $i = 3$, the distance between the two intersection points from some line will go to zero because some line will get arbitrarily close to being tangent to $\partial B^4$. Therefore, we see that $M$ must be properly embedded.
\end{proof}

Perhaps the most interesting aspect of $M$ is how it interacts with knots. Let $\gamma: \R\to S^3$ be a $\Z$-periodic parameterization of a knot. We define $Q_\gamma \subseteq C_6(S^3)$ to be the set of all 6-tuples of the form $(\gamma(t_1), ... , \gamma(t_6))$ such that $t_1 < t_2 < ... < t_6 < t_1 + 1$. We will see that the oriented intersection class of $M$ with $Q_\gamma$ is well-defined in $H_1(Q_\gamma)$ despite the fact that these two manifolds are non-compact.

\begin{prp}\label{diffeoQ}
$Q_\gamma$ is diffeomorphic to $\R^5 \times S^1$.
\end{prp}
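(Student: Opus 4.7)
The plan is to identify $Q_\gamma$ with a quotient of an open subset of $\R^6$ by a diagonal $\Z$-action and then perform an explicit change of variables that separates the quotient into an $S^1$ factor and a factor diffeomorphic to an open $5$-simplex.

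First I would use the hypothesis that $\gamma$ is an analytic $\Z$-periodic parameterization of a knot with non-vanishing derivative to conclude that $\gamma$ descends to an embedding of $\R/\Z$ into $S^3$. Consequently the map
\[
\Phi(t_1,\dots,t_6)=(\gamma(t_1),\dots,\gamma(t_6))
\]
from the open set
\[
U=\{(t_1,\dots,t_6)\in\R^6 : t_1<t_2<\cdots<t_6<t_1+1\}
\]
to $C_6(S^3)$ is a smooth immersion whose image is exactly $Q_\gamma$, and two tuples in $U$ have the same image if and only if they differ by a simultaneous integer shift $(t_1,\dots,t_6)\mapsto(t_1+n,\dots,t_6+n)$. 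Thus $Q_\gamma$ is naturally diffeomorphic to $U/\Z$ for this diagonal $\Z$-action.

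Next I would introduce the coordinates
\[
s_1=t_1,\qquad s_i=t_i-t_{i-1}\ (2\le i\le 6),\qquad s_7=t_1+1-t_6.
\]
The map $(t_1,\dots,t_6)\mapsto(s_1,s_2,\dots,s_7)$ is a diffeomorphism from $U$ onto $\R\times \Delta$, where
\[
\Delta=\{(s_2,\dots,s_7)\in\R^6 : s_i>0,\ s_2+\cdots+s_7=1\}
\]
is the open standard $5$-simplex. Under these coordinates the $\Z$-action becomes $s_1\mapsto s_1+1$ with $s_2,\dots,s_7$ fixed, so
\[
Q_\gamma\;\cong\;U/\Z\;\cong\;(\R/\Z)\times\Delta\;\cong\;S^1\times\Delta.
\]
Finally I would invoke the standard fact that the open $n$-simplex is diffeomorphic to $\R^n$ (for instance via any smooth chart such as the barycentric-log map), giving $\Delta\cong\R^5$ and therefore $Q_\gamma\cong S^1\times\R^5$.

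There is no real obstacle here; the only thing to be a little careful about is checking that $\Phi$ is genuinely a smooth embedding of $U/\Z$, which uses exactly the non-vanishing derivative hypothesis together with the fact that $\gamma$ parameterizes an embedded (rather than merely immersed) knot. The rest is a routine change of coordinates.
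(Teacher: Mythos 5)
Your proof is correct and follows essentially the same route as the paper: both parameterize a point of $Q_\gamma$ by the base time $t_1$ (which matters only mod $\Z$, giving the $S^1$ factor) together with the successive gaps, which form a point of the open $5$-simplex, hence $\R^5$. Your version just spells out the quotient-by-$\Z$ bookkeeping more explicitly; no substantive difference.
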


\begin{proof}
We can select a point of $Q_\gamma$ by first selecting $t_1$, and then selecting positive real numbers $s_1,...,s_6$ such that $s_1 + ... + s_6 = 1$ and then setting $t_n = t_1 + s_1 + ... + s_{n-1}$. The only redundancy is the choice of $t_1$ which only matters mod $\Z$.  This demonstrates that a point of $Q_\gamma$ is determined uniquely by a point in $S_1$ and a point in the interior of the 5-simplex, meaning that $Q_\gamma$ is diffeomorphic to $\R^5\times S^1$.
\end{proof}

\begin{crl}
The manifold $Q_\gamma$ is $6$ dimensional and orientable
\end{crl}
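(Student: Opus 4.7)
The plan is to invoke Proposition \ref{diffeoQ} directly. Since diffeomorphisms preserve dimension and orientability, it suffices to observe that $\R^5 \times S^1$ has these two properties.

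For the dimension, I would simply note that $\R^5 \times S^1$ is a product of a $5$-manifold and a $1$-manifold, so it is $6$-dimensional. For orientability, I would recall that a product of orientable manifolds is orientable: both $\R^5$ and $S^1$ are orientable (the former trivially, the latter as a parallelizable Lie group), and a product orientation can be obtained by taking the wedge of the pulled-back volume forms from each factor. Thus $\R^5 \times S^1$ is orientable, and transporting this orientation across the diffeomorphism of Proposition \ref{diffeoQ} orients $Q_\gamma$.

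There is no real obstacle here; the corollary is immediate once one has Proposition \ref{diffeoQ} in hand. The only reason it warrants its own statement is that orientability of $Q_\gamma$ (together with that of $M$ from the previous corollary) is exactly what will later make the oriented intersection class $[M\cap Q_\gamma]\in H_1(Q_\gamma)$ meaningful, so it is worth isolating as a standalone fact for the intersection-theoretic arguments to come.
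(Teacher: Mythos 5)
Your proposal is correct and follows exactly the same route as the paper, which simply notes that $\R^5 \times S^1$ is $6$ dimensional and orientable and transports these properties through the diffeomorphism of Proposition \ref{diffeoQ}. Your additional remarks on product orientations are fine but not needed beyond that observation.
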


\begin{proof}
 $\R^5 \times S^1$ is $6$ dimensional and orientable.
\end{proof}

\begin{prp}\label{properQ}
The manifold $Q_\gamma$ is properly embedded in $C_6(S^3)$.
\end{prp}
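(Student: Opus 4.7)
The plan is to invoke the diffeomorphism $Q_\gamma \cong S^1 \times \mathrm{int}(\Delta^5)$ from Proposition \ref{diffeoQ}, where the simplex factor records the six gap lengths $s_i = t_{i+1} - t_i$ (indices mod $6$, with $t_7 := t_1 + 1$), and the $S^1$ factor records $t_1 \bmod \Z$. Since $S^1$ is already compact, a sequence in $Q_\gamma$ fails to have a convergent subsequence in $Q_\gamma$ if and only if its image in $\mathrm{int}(\Delta^5)$ accumulates on $\partial \Delta^5$, i.e.\ some gap $s_i^n \to 0$ along the sequence. This reduces properness to a single assertion: if $s_i^n \to 0$ for some $i$, then the corresponding image sequence in $C_6(S^3)$ has no limit in $C_6(S^3)$.

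To prove this, I would take an arbitrary sequence $q_n = (\gamma(t_1^n), \ldots, \gamma(t_6^n)) \in Q_\gamma$ that converges in $C_6(S^3)$ to some $(x_1,\ldots,x_6)$ with all $x_i$ distinct, and show it has a subsequence converging inside $Q_\gamma$. Normalize so $t_1^n \in [0,1)$; then all $t_i^n$ lie in the compact interval $[0,2]$, so after passing to a subsequence each $t_i^n \to \bar{t}_i$ with $\bar{t}_1 \le \bar{t}_2 \le \cdots \le \bar{t}_6 \le \bar{t}_1 + 1$. By continuity $\gamma(\bar{t}_i) = x_i$. Since $\gamma$ is an injective parameterization of a knot when restricted to any half-open period, the distinctness of the $x_i$ forces all six inequalities $\bar{t}_i < \bar{t}_{i+1}$ (with $\bar{t}_7 := \bar{t}_1 + 1$) to be strict: a coincidence $\bar{t}_i = \bar{t}_{i+1}$ for $i < 6$ would give $x_i = x_{i+1}$, and $\bar{t}_6 = \bar{t}_1 + 1$ would give $x_6 = x_1$ by periodicity. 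Hence $(\bar{t}_1,\ldots,\bar{t}_6)$ represents a point of $Q_\gamma$, and $q_n$ converges to it inside $Q_\gamma$.

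This simultaneously establishes that $Q_\gamma$ is closed in $C_6(S^3)$ and that the embedding of Proposition \ref{diffeoQ} is proper: preimages of compact sets are sequentially closed and bounded in the simplex and circle coordinates, hence compact. I do not expect a serious obstacle; the only place one must be careful is the boundary case $s_6^n \to 0$, corresponding to $t_6^n - t_1^n \to 1$, which uses the $\Z$-periodicity of $\gamma$ rather than local injectivity. The injectivity of $\gamma$ modulo $\Z$ (equivalently, the fact that $\gamma$ parameterizes an embedded knot) is what makes every boundary stratum of $\Delta^5$ incompatible with the $C_6(S^3)$ nondegeneracy condition, so proper embedding comes down to this single input.
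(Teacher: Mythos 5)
Your proposal is correct and follows essentially the same route as the paper: both use the $S^1\times\mathrm{int}(\Delta^5)$ parameterization from Proposition \ref{diffeoQ} and the observation that a collapsing gap $s_i\to 0$ forces two of the six points to coincide, which is incompatible with convergence in $C_6(S^3)$. You simply phrase this as the contrapositive (a sequence converging in $C_6(S^3)$ has a subsequence converging in $Q_\gamma$) and spell out the normalization and limit-parameter details that the paper leaves implicit; note only that the strictness of the limiting inequalities needs just continuity and $\Z$-periodicity of $\gamma$, not its injectivity.
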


\begin{proof}
If a sequence of points in $Q_\gamma$ eventually leaves every compact set, then using the notation from Proposition \ref{diffeoQ}, the minimum value of $s_1,...,s_6$ must go to zero. This means that the minimum distance between two points must go to zero in $C_6(S^3)$.
\end{proof}

Let $\gamma: \R\to S^3$ be a $\Z$-periodic smooth parameterization of a knot with nonvanishing derivative. If we think of $S^3$ as the unit sphere in $\R^4$, we can define the thickness $\tau(\gamma)$ to be the infimum radius of all of the circles in $\R^4$ that lie in $S^3$ and pass through at least three points on the knot. For a smooth parameterization with nonvanishing derivative, the thickness is always a positive real number, and it varies continuously with respect to the smooth topology.

We will now prove a couple of of facts about thickness.

\begin{prp}\label{sep}
If $y_1$ and $y_2$ are two points on a knot $\gamma$ with a distance strictly less than $2\tau(\gamma)$, then, letting $P$ denote the 2-sphere in $S^3$ in which $y_1$ and $y_2$ are antipodal points, the knot $\gamma$ intersects $P$ transversely, and intersects $P$ only at the points $y_1$ and $y_2$. Thus, of the two pieces of the knot on either side of the points $y_1$ and $y_2$, one piece lies entirely on one side of $P$ and the other piece lies entirely on the other side of $P$.
\end{prp}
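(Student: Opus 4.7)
My plan is to identify the 2-sphere $P$ concretely inside $\R^4$, compute its radius, and then invoke the resulting strict inequality $\mathrm{radius}(P)<\tau(\gamma)$ twice --- once to exclude a third intersection point via a three-point circle lying inside $P$, and once to exclude tangency via a secant-to-tangent limit that again produces a three-point circle of small radius. The separation statement then drops out of transversality.

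First I would set $m:=\tfrac12(y_1+y_2)$ and identify $P$ as $S^3\cap H$, where $H\subset\R^4$ is the affine hyperplane through $m$ perpendicular to $m$; this is the unique hyperplane making $y_1,y_2$ antipodal in the round sphere $S^3\cap H$. A direct calculation gives $\mathrm{radius}(P)=\sqrt{1-|m|^2}=|y_1-y_2|/2$, which is strictly less than $\tau(\gamma)$ by hypothesis. Now for a hypothetical third intersection $y_3\in\gamma\cap P$, the three points $y_1,y_2,y_3$ cannot be collinear in $\R^4$ (a line meets the 2-sphere $P$ in at most two points), so they span an affine 2-plane whose intersection with $S^3$ is a circle $C\subseteq P$ through all three. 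Any circle on a 2-sphere has radius at most that of the sphere, so $C$ has radius at most $|y_1-y_2|/2<\tau(\gamma)$, contradicting the definition of thickness.

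For transversality at $y_1$ (the $y_2$ case is identical), I would assume for contradiction that $v:=\gamma'(y_1)\in T_{y_1}P=\{w:w\cdot y_1=w\cdot m=0\}$. Pick distinct knot points $z_n\to y_1$ and let $C_n\subset S^3$ be the unique circle through $z_n,y_1,y_2$; these three points are non-collinear for all large $n$ because the normalized secant $(z_n-y_1)/|z_n-y_1|$ converges to $v$, and $v$ cannot be parallel to $y_2-y_1$ (we have $v\cdot y_1=0$ whereas $(y_2-y_1)\cdot y_1\ne 0$). The spanning planes $\Pi_n$ then converge to $\Pi_\infty$, the 2-plane through $y_1,y_2$ with direction $v$; crucially, the tangency condition $v\cdot m=0$ forces $\Pi_\infty\subseteq H$. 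Hence the limit circle $C_\infty=\Pi_\infty\cap S^3$ lies in $P$ and has radius at most $|y_1-y_2|/2<\tau(\gamma)$. By continuity of the three-point circumradius, the same is true of $C_n$ for all large $n$, giving a circle of $S^3$ through three distinct knot points with radius below $\tau(\gamma)$, which is the contradiction.

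The separation conclusion then follows immediately: $P$ bounds two open 3-balls in $S^3$, the knot meets $P$ transversely only at $\{y_1,y_2\}$, so each arc of $\gamma$ cut out by these two points is contained in $S^3\setminus P$, and transversality at $y_1$ forces the two arc-germs to enter opposite components. The subtle point and main obstacle is the transversality step: everything turns on recognizing that $v\in T_{y_1}P$ is equivalent to the algebraic condition $v\cdot m=0$, which is precisely what pulls the limit plane $\Pi_\infty$ inside $H$ and so drops the limit circle into $P$, where the radius bound from the first step becomes available.
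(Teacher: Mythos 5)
Your proof is correct and follows essentially the same route as the paper's: a third intersection point would yield a circle lying in $P$ of radius at most $|y_1-y_2|/2<\tau(\gamma)$, and tangency is excluded by letting a nearby knot point limit onto $y_i$ so that the secant circles through $y_1,y_2$ degenerate into $P$ and acquire radius below $\tau(\gamma)$. The only difference is that you carefully spell out the plane-convergence argument that the paper's one-line tangency step leaves implicit, which is a welcome but not substantively different elaboration.
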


\begin{proof}

To demonstrate that $y_1$ and $y_2$ are the only intersection points with $P$, we simply note that if a third point $y_3$ lied on $P$ then the circle passing through $(y_1,y_2,y_3)$ would have radius less than $\tau(\gamma)$ which contradicts our assumptions. Now, to see that the knot must intersect $P$ transversely, observe that if the tangent line of the knot at $y_i$ (for $i= 1$ or $2$) were to be tangent to $P$, then taking $y_3$ to be on the knot and very close to $y_i$, the circle through these points will have a radius less than $\tau(\gamma)$ which contradicts our assumptions. 

\end{proof}

\begin{prp}\label{adjacent}
Given a finite set $F$ of points on a knot $\gamma$, if some pair of the points are within distance $2\tau(\gamma)$ of each other, then the pair with minimal distance are adjacent with respect to the cyclic order of the points on the knot (in the sense that there is a path on the knot between them that does not contain any other point from the set). 
\end{prp}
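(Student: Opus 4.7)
The plan is to apply Proposition \ref{sep} to the minimum-distance pair in $F$ and then use a direct distance computation to force all other points of $F$ off one of the two knot arcs joining that pair.

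Let $(y_1, y_2)$ be a pair in $F$ realizing the minimum Euclidean distance, and set $d := |y_1 - y_2|$. The hypothesis that some pair in $F$ lies within $2\tau(\gamma)$, combined with minimality, gives $d < 2\tau(\gamma)$, so Proposition \ref{sep} applies and produces a 2-sphere $P \subset S^3$ on which $y_1, y_2$ are antipodal, which meets the knot only at $y_1$ and $y_2$, with the two arcs of $\gamma$ between them lying strictly on opposite sides of $P$.

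The main geometric step, which I expect to require the most care, is to identify the two open sides of $P$ in $S^3$ in terms of distances to $y_1$ and $y_2$. Since $y_1$ and $y_2$ are antipodal on $P$ with chord length $d$, the Euclidean center of $P$ in $\R^4$ is the midpoint $(y_1+y_2)/2$, which forces the 3-plane cutting $P$ out of $S^3$ to be normal to $y_1+y_2$. For $q \in S^3$ the identity $|q-y_1|^2 + |q-y_2|^2 = 4 - 2\, q\cdot(y_1+y_2)$ then shows that the signed distance from $q$ to this 3-plane is an affine function of $|q-y_1|^2 + |q-y_2|^2$, taking value exactly $d^2$ on $P$. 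Hence one open side of $P$, call it $A$, is the locus $\{q \in S^3 : |q-y_1|^2 + |q-y_2|^2 < d^2\}$, and on $A$ both $|q-y_1|$ and $|q-y_2|$ are strictly less than $d$.

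Finally, by Proposition \ref{sep} exactly one of the two knot arcs between $y_1$ and $y_2$ has its interior in $A$. Any point of $F$ lying in the interior of that arc would, paired with $y_1$, give a pair in $F$ at distance strictly less than $d$, contradicting minimality. So that arc contains no point of $F$ besides its endpoints, which is precisely the statement that $y_1$ and $y_2$ are adjacent in the cyclic order on the knot. I do not expect any further obstacle beyond the elementary characterization of side $A$ described above.
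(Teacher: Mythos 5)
Your proof is correct and takes essentially the same route as the paper: apply Proposition \ref{sep} to the minimal-distance pair to get the sphere $P$ with $y_1,y_2$ antipodal, then use minimality of $d$ to exclude points of $F$ from the arc lying on the near side of $P$. Your explicit identification of that side as $\{q\in S^3 : |q-y_1|^2+|q-y_2|^2 < d^2\}$ merely spells out the paper's implicit observation that every point of the arc inside $P$ is closer to $y_1$ than $y_2$ is.
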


\begin{proof}
From the previous proposition, we see that if $y_1, y_2 \in F$ have minimal distance and that distance is less than $2\tau(\gamma)$, then the sphere $P$ with $y_1$ and $y_2$ as its antipodes has a path in $\gamma$ passing through the inside of this sphere and going from $y_1$ to $y_2$. If any point from $F$ lied on this path, then that point would be closer to $y_1$ than $y_2$ and this would contradict minimality.
\end{proof}

\begin{prp}\label{bound}
Let $\gamma: \R\to S^3$ be a smooth parameterization of a knot with nonvanishing derivative. Then, considering $S^3$ as the unit sphere in $\R^4$, for any 6-tuple of points $(x_1,...,x_n)\in M\cap Q_\gamma\subseteq C_6(S^3)$ and any $i\neq j$ from $1$ to $6$, we have $|x_i - x_j| \geq 2\tau(\gamma)$. \end{prp}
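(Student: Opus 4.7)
The plan is proof by contradiction. Suppose $|x_i - x_j| < 2\tau(\gamma)$ for some pair. By Proposition~\ref{adjacent} the minimum-distance pair is cyclically adjacent, so after a cyclic relabeling we may assume it is $(x_1, x_2)$. Let $P$ be the 2-sphere in $S^3$ in which $x_1, x_2$ are antipodal and $H$ the 3-plane in $\R^4$ with $H \cap S^3 = P$. Proposition~\ref{sep} gives $\gamma \cap P = \{x_1, x_2\}$, so the long arc of $\gamma$ from $x_2$ back to $x_1$ through $x_3, x_4, x_5, x_6$ lies entirely on one side of $H$; call this side $B$ and the other $A$, so that $x_1, x_2 \in H$ and $x_3, x_4, x_5, x_6 \in B$.

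Next I bring in the $M$-structure: there are a point $p \in \RP^4 \setminus B^4$ and, for each $i \in \{1,2,3\}$, a line $\ell_i$ through $p$ containing $\{x_i, x_{i+3}\}$; moreover the tangent 2-sphere $T_p$ partitions the six points into $\{x_1, x_3, x_5\}$ and $\{x_2, x_4, x_6\}$, one block per side of $T_p$. The whole argument proceeds by locating two points from different blocks on the same side of $T_p$ and thereby contradicting this partition. Note first that $p \notin H$, since otherwise $p \in \ell_1 \cap H = \{x_1\}$ and $p \in \ell_2 \cap H = \{x_2\}$, forcing $x_1 = x_2$. So $p$ lies either on side $A$, on side $B$, or at infinity (with $\ell_1, \ell_2, \ell_3$ parallel in a common direction $v$).

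Suppose first that $p$ is finite and on side $A$. Since $x_1, x_4, p$ are collinear on $\ell_1$ with $x_1 \in H$, $x_4 \in B$, $p \in A$, the point $x_1$ lies strictly between $x_4$ and $p$, so $|x_1 - p| < |x_4 - p|$. Via the identity $|x - p|^2 = 1 + |p|^2 - 2 p \cdot x$ for $x \in S^3$, this gives $p \cdot x_1 > p \cdot x_4$; combined with $x_1, x_4$ lying on opposite sides of $T_p = \{p \cdot x = 1\}$, it places $x_1 \in \{p \cdot x > 1\}$. The same reasoning on $\ell_2$ puts $x_2 \in \{p \cdot x > 1\}$, and since $x_1, x_2$ lie in different blocks of the partition this is the contradiction. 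If $p$ is finite on side $B$, the condition $p \notin B^4$ now forces $x_4$ to lie between $x_1$ and $p$ on $\ell_1$, and $x_5$ between $x_2$ and $p$ on $\ell_2$, placing $x_4, x_5 \in \{p \cdot x > 1\}$ and again contradicting the partition. Finally, for $p$ at infinity in direction $v$ we have $T_p = \{v \cdot x = 0\}$; writing $x_4 = x_1 + \alpha_1 v$ and $x_5 = x_2 + \alpha_2 v$, the side-$B$ conditions for $x_4, x_5$ force $\alpha_1$ and $\alpha_2$ to share the sign opposite to $(x_1 + x_2) \cdot v$, while the requirement that $x_1, x_4$ and $x_2, x_5$ each lie on opposite sides of $T_p$ forces $v \cdot x_i$ to have sign opposite to $\alpha_i$ for $i = 1, 2$; hence $v \cdot x_1$ and $v \cdot x_2$ share a common sign, putting $x_1, x_2$ on the same side of $T_p$ and yielding the same contradiction. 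I expect the main obstacle to be this point-at-infinity case, where ``closer to $p$'' has no intrinsic meaning and must be replaced by the algebraic sign analysis involving $v$, though the underlying dichotomy is the same as in the finite cases.
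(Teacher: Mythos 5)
Your argument is correct, but it takes a genuinely different route from the paper's. After invoking Proposition~\ref{adjacent} to make the minimal pair cyclically adjacent (as you do), the paper finishes in one step with no case analysis: since $x_1$ and $x_2$ lie on different lines of the configuration and on opposite sides of $T_p$, the two lines through them span a 2-plane meeting $S^3$ in a circle containing the four points $x_1,x_4,x_2,x_5$, arranged so that $x_1$ and $x_2$ are \emph{non-adjacent} on that circle; since the closest pair among concyclic points is always an adjacent pair, this contradicts minimality of $|x_1-x_2|$. That argument never uses Proposition~\ref{sep} directly and never needs to locate $p$. You instead bring in Proposition~\ref{sep} to get the separating hyperplane $H$ and run a three-case sign analysis on the position of $p$ (side $A$, side $B$, at infinity), in each case forcing two consecutively labeled points onto the same side of $T_p$ and contradicting the prism/$\text{Cyc}_6$ structure of $M$; this is valid, and your identification of $T_p$ as $\{p\cdot x=1\}$ (resp.\ $\{v\cdot x=0\}$ at infinity) together with the betweenness arguments (using $p\notin B^4$ and convexity of the chord) checks out, including the observation that a direction $v$ parallel to $H$ is automatically excluded. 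One small inaccuracy: in the infinity case the claim that $\alpha_1,\alpha_2$ have the sign \emph{opposite to} $(x_1+x_2)\cdot v$ is not justified, since it depends on which side of $H$ the arc lies on; but your conclusion only needs $\alpha_1$ and $\alpha_2$ to share a sign, which does follow, so this is cosmetic. The trade-off: the paper's circle argument is shorter, uniform in $p$ (the point at infinity needs no special treatment since concyclicity is projective), and needs only Proposition~\ref{adjacent}; yours makes the role of the separating sphere explicit at the cost of the case split and the coordinate computations.
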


\begin{proof}

We will derive a contradiction from assuming that there exists a point $(x_1,...,x_6)\in M\cap Q_\gamma$ such that the pair of indices $i,j$ with minimal $|x_i - x_j|$ have $|x_i - x_j| < 2\tau(\gamma)$. By Proposition \ref{adjacent} we have that $i$ and $j$ are adjacent with respect to the cyclic ordering of the knot.  Recalling the construction of $M$, we have that $x_i$ and $x_j$ are from different lines, and they lie on different sides of the partition $T_p$. This means that if $x_k$ is the other point on the same line as $x_i$ and $x_\ell$ is the other point on the same line as $x_j$, then the four points  $x_i,x_j,x_k,x_\ell$ lie on a circle in such a way that $x_i$ and $x_j$ are non-adjacent. However, the closest pair of points on a cyclic polygon must be adjacent. This means that the pair of points of minimal distance from of the set $\{x_i,x_j,x_k,x_\ell\}$ are not $x_i$ and $x_j$, which contradicts our assumption of minimality.\end{proof}

\begin{prp}\label{inv}
Despite $M$ and $Q_\gamma$ being non-compact, the oriented intersection homology class $[M\cap Q_\gamma] \in H_1(Q_\gamma)$ is well-defined and invariant with respect to isotopies of $\gamma$.
\end{prp}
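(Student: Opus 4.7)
The plan is to leverage Proposition \ref{bound} to confine the intersection $M \cap Q_\gamma$ to a compact region of $C_6(S^3)$, so that standard compact transverse intersection theory can be applied after a generic perturbation, and then to use continuity of thickness along an isotopy to assemble a compact cobordism between the resulting 1-cycles. The main obstacle is precisely that $M$ and $Q_\gamma$ are non-compact, so intersection theory is not a priori available; the entire argument hinges on Proposition \ref{bound}, which bounds the intersection away from the non-compact ends of both manifolds.

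First, by Proposition \ref{bound} every point $(x_1,\dots,x_6) \in M \cap Q_\gamma$ has $|x_i - x_j| \geq 2\tau(\gamma)$ for all $i \neq j$. Since $\tau(\gamma) > 0$, the set
\[ K_\gamma = \{(x_1,\dots,x_6) \in C_6(S^3) : |x_i - x_j| \geq 2\tau(\gamma) \text{ for all } i \neq j\} \]
is closed in the compact manifold $(S^3)^6$ and bounded away from the fat diagonal, hence is a compact subset of $C_6(S^3)$. Since $M$ and $Q_\gamma$ are properly embedded (Propositions \ref{properM} and \ref{properQ}), $M \cap Q_\gamma$ is closed in $C_6(S^3)$, and containment in $K_\gamma$ makes it compact.

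Second, the dimensions add up correctly: $\dim M + \dim Q_\gamma - \dim C_6(S^3) = 13 + 6 - 18 = 1$, so the generic transverse intersection is a 1-manifold. Because $\tau$ is continuous in the smooth topology, a sufficiently small $C^\infty$-perturbation of $\gamma$ keeps $\tau$ bounded below and so preserves the compactness argument above, while transversality between $M$ and $Q_\gamma$ is achieved as an open dense condition. The orientations of $M$, $Q_\gamma$, and the ambient $C_6(S^3)$ (open in $(S^3)^6$) induce a canonical orientation on the resulting compact transverse intersection, which, as a compact oriented 1-submanifold of $Q_\gamma$, represents a class in $H_1(Q_\gamma)$. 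Independence of the perturbation follows by the usual cobordism argument: a generic 1-parameter family joining any two transverse perturbations yields a compact oriented 2-manifold in $Q_\gamma$ (still contained in a $K_\gamma$-style compact region by the thickness bound) whose boundary is the signed difference of the two 1-cycles.

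Third, for isotopy invariance consider a smooth isotopy $\gamma_s$, $s \in [0,1]$, and define
\[ F : Q_{\gamma_0} \times [0,1] \to C_6(S^3), \qquad F\bigl((\gamma_0(t_1),\dots,\gamma_0(t_6)), s\bigr) = (\gamma_s(t_1),\dots,\gamma_s(t_6)), \]
which restricts at each $s$ to a diffeomorphism onto $Q_{\gamma_s}$. After a small perturbation of $F$ (obtained from an auxiliary perturbation of the isotopy) making it transverse to $M$, the preimage $F^{-1}(M) \subset Q_{\gamma_0} \times [0,1]$ is a 2-manifold whose boundary at $s=0$ and $s=1$ recovers the two intersection 1-cycles, pulled back to $Q_{\gamma_0}$. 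Since $\tau$ is continuous on the compact family $\{\gamma_s\}_{s \in [0,1]}$, we have $\tau^* = \inf_{s} \tau(\gamma_s) > 0$, so $F^{-1}(M)$ is contained in the compact region $\{(y_1,\dots,y_6) : |y_i - y_j| \geq 2\tau^*\} \times [0,1]$ and is itself compact. This provides a compact oriented cobordism realizing the equality of the two classes in $H_1(Q_{\gamma_0})$, completing the proof.
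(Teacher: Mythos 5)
Your overall strategy is sound and is essentially the paper's: use Proposition \ref{bound} together with positivity and continuity of the thickness $\tau$ to trap $M\cap Q_\gamma$ (and the cobordisms) in a compact region away from the fat diagonal, then run standard transverse intersection and cobordism arguments, with the correct dimension count $13+6-18=1$. The one real difference is \emph{what} you perturb: you perturb $\gamma$ (and, for invariance, the family $\gamma_s$) to achieve transversality, whereas the paper keeps $Q_\gamma$ fixed and perturbs $M$ by an isotopy supported inside the compact set $K_{\tau(\gamma)}$. The paper's choice buys two things you currently gloss over. First, transversality by ambient perturbation of one submanifold against a fixed one is an off-the-shelf fact, while your claim that transversality is ``an open dense condition'' under perturbations of $\gamma$ needs a short parametric-transversality argument (the evaluation map $(\gamma',t_1,\dots,t_6)\mapsto(\gamma'(t_1),\dots,\gamma'(t_6))$ is a submersion because the $t_i$ are distinct mod $\Z$, so bump perturbations with disjoint supports move the six points independently). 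Second, and more importantly, after perturbing $\gamma$ your transverse 1-manifold lies in $Q_{\gamma'}$, not in $Q_\gamma$, and your ``usual cobordism argument'' for independence of the perturbation produces a surface in the union of the $Q_{\gamma_s}$, not in $Q_\gamma$ itself; so to land a well-defined class in $H_1(Q_\gamma)$ you must transfer it, e.g.\ by the pullback device you use in your isotopy step (or via the canonical diffeomorphisms $Q_{\gamma}\cong\R^5\times S^1$). This is fixable and your step-three construction with $F$ shows you know how, but as written step two conflates $Q_{\gamma'}$ with $Q_\gamma$; the paper's perturbation of $M$ inside $K_{\tau(\gamma)}$ avoids the issue entirely because the cycle literally sits in the fixed $Q_\gamma$. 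Also make sure the perturbation of $F$ in the isotopy step is taken relative to $s=0,1$ (where transversality already holds), so the boundary of $F^{-1}(M)$ is exactly the two given cycles.
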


\begin{proof}
Let $K_\varepsilon$ denote the compact subset of $C_6(S^3)$ consisting of $6$-tuples for which each pair of points are no less than a distance of $\varepsilon $ apart. From Proposition \ref{bound}, we see that $Q_\gamma\cap M$ lies inside $K_{2\tau(\gamma)}$, and therefore lies strictly within the interior of $K_{\tau(\gamma)}$. This means that we can make $M$ transverse to $Q_\gamma$ by applying a small isotopy that only modifies the inside of $K_{\tau(\gamma)}$. Let $M'$ be the perturbation of $M$ we get after applying such an isotopy. Then $M'\cap Q_\gamma$ is an oriented 1-dimensional manifold that lies inside of $Q_\gamma$, and it therefore defines a homology class which we denote $[M\cap Q_\gamma]\in H_1(Q_\gamma)$.  First, we must check that this homology class does not depend on $M'$. Let $M'_0$ and $M'_1$ be two choices for $M'$. Then we let $M(t)$ be an isotopy so that $M(0) = M'_0$ and $M(1) = M'_1$, and $M(t)$ agrees with $M$ outside of $K_{\tau(\gamma)}$. Then $M(t)$ for $t\in [0,1]$ can be thought of as a submanifold $\mathbf{M}$ of $C_6(S^3) \times [0,1]$, and we can perturbe this manifold slightly to make a submanifold $\mathbf{M}'$ of $C_6(S^3) \times [0,1]$ that intersects $Q_\gamma\times[0,1]$ transversely. The intersection then yields a cobordism from $M_0'\cap Q_\gamma$ to $M_1'\cap Q_\gamma$ over $Q_\gamma$. This demonstrates that the homology class does not depend on the choice of $M'$. Now, we must show that the homology class we have defined is an isotopy invariant of $\gamma$. Take an isotopy $\gamma_t$. As the manifolds $Q_{\gamma_t}$ vary with $t\in [0,1]$ we get a submanifold $\mathbf{Q}$ of $C_6(S^3) \times [0,1]$. Now, let $\varepsilon = \min_{t\in [0,1]} \tau(\gamma_t)$, which exists and is positive because $\tau$ is continuously dependent on $\gamma$. The intersection $\mathbf{Q}\cap (M\times[0,1])$ is inside the interior of the compact set $K_{\varepsilon}\times[0,1]$ so we can let $\mathbf{M}$ be a perturbation of $M\times [0,1]$ which is unperturbed outside of $K_\varepsilon \times[0,1]$ and is transverse to $\mathbf{Q}$. Then, $\mathbf{Q}\cap \mathbf{M}$ is a cobordism in $\mathbf{Q}$ from a manifold in $Q_{\gamma_0}$ representing $[M\cap Q_{\gamma_0}]$ to a manifold in $Q_{\gamma_1}$ representing $[M\cap Q_{\gamma_1}]$. Furthermore, $\mathbf{Q}$ is homotopy equivalent via the inclusions to both $Q_{\gamma_0}$ and to $Q_{\gamma_1}$. This demonstrates that the homology class in $H_1(Q_\gamma)$ is isotopy invariant with respect to $\gamma$, and is therefore a knot invariant. 
\end{proof}

We have a canonical, continuously dependent diffeomorphism $Q_\gamma\to \R^5 \times S^1$ from the proof of Proposition \ref{diffeoQ}. Therefore, the knot invariant $[M\cap Q_\gamma]$ can essentially be thought of as an integer, but with sign dependent on our choice of orientations for $Q_\gamma$, $M$, and $S^1$. 

\section{Identifying the Intersection Class}
The invariant constructed in Proposition $\ref{inv}$ will be denoted by $\mathfrak{K}(\gamma) = [M\cap Q_\gamma]$. In this section, we will identify $\mathfrak{K}(\gamma)$ as the quadratic term of the Conway polynomial multiplied by some fixed element of $H_1(Q_\gamma) \simeq \Z$. 

\begin{prp}
$\mathfrak{K}(\gamma)$ is a rank 2 Vassiliev invariant. 
\end{prp}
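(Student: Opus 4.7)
The plan is to show that $\mathfrak{K}$ is a Vassiliev invariant of order at most $2$, i.e., that its extension $V$ to singular knots (defined inductively by $V(\gamma^{\times}) = V(\gamma_+) - V(\gamma_-)$ at each transverse self-intersection) vanishes on every singular knot with three double points.

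First, one reinterprets the jump at a single crossing change geometrically. Given a generic $1$-parameter family $\gamma_t$ acquiring a transverse self-intersection at $t = 0$ with $\gamma_0(s) = \gamma_0(s')$ for $s \neq s'$, the family $\bigsqcup_t Q_{\gamma_t} \times \{t\}$ is a $7$-dimensional submanifold of $C_6(S^3) \times [-1,1]$. Using the thickness bound from Proposition \ref{bound} (applied to a small smoothing of the singular knot) together with the same transversality argument used in the proof of Proposition \ref{inv}, one arranges a perturbation of $M \times [-1,1]$ transverse to this family, producing a cobordism between representatives of $[M \cap Q_{\gamma_+}]$ and $[M \cap Q_{\gamma_-}]$. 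The algebraic count can only jump at $t=0$ through a $6$-tuple $(\gamma_0(t_1),\ldots,\gamma_0(t_6))$ in which two entries coincide, and this forces some pair $\{t_i,t_j\}$ to equal $\{s,s'\}$. Hence $\mathfrak{K}(\gamma_+) - \mathfrak{K}(\gamma_-)$ is the signed count of $6$-tuples in $M$ whose time-coordinates include the pair $\{s,s'\}$.

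Next, iterate. For a singular knot $\gamma^*$ with double points at $\gamma^*(a_\ell) = \gamma^*(b_\ell)$ for $\ell = 1, \ldots, k$, the $k$-fold alternating sum $V(\gamma^*)$ is a signed count of $6$-tuples $(\gamma^*(t_1),\ldots,\gamma^*(t_6)) \in M$ in which $k$ disjoint pairs from $\{t_1,\ldots,t_6\}$ are bijectively matched with the $k$ double-point pairs, summed over choices of matching. Specializing to $k = 3$, three disjoint time-pairs saturate all six slots, so every contributing $6$-tuple has three pairs of coincident points and hence only three distinct locations in $S^3$. But $M \subseteq C_6(S^3)$ consists of $6$-tuples of \emph{pairwise distinct} points, so no such configuration lies in $M$; the count is zero and $V(\gamma^*) = 0$. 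This shows $\mathfrak{K}$ is Vassiliev of order at most $2$.

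The step I expect to require the most care is the first: at the instant $t = 0$, the relevant $6$-tuples lie on the diagonal stratum where $M$ is not defined, so one must either work in a suitable compactification of $C_6(S^3)$ (e.g.\ Fulton--MacPherson) or, more elementarily, approximate $\gamma_0$ by a family of smooth knots and invoke the thickness bound to confirm that no additional contributions arise from other boundary strata of configuration space. Once the jump formula is set up correctly, the conclusion for $k = 3$ is a pure dimension count: three coincidence constraints use up all six time parameters and force every candidate configuration into a stratum disjoint from $M$.
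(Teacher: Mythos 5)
There is a genuine gap, and it sits exactly where your proposal defers the work. Read literally, your jump formula is vacuous: a $6$-tuple whose time-coordinates contain the pair $\{s,s'\}$ has two equal entries $\gamma_0(t_i)=\gamma_0(t_j)$, hence lies on the diagonal and is never a point of $M\subseteq C_6(S^3)$; taken at face value this would say every crossing change leaves $\mathfrak{K}$ unchanged, contradicting the fact that $\mathfrak{K}$ is a nonzero multiple of the Conway coefficient. What you actually need is a statement about ends of the $1$-dimensional cobordism $\mathbf{M}'\cap\mathbf{Q}$ escaping into the diagonal strata as the family passes the singular knot, i.e.\ a compactification (or blow-up) analysis. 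That analysis is nontrivial: the thickness bound of Proposition \ref{bound} cannot be ``invoked'' here because $\tau(\gamma_t)\to 0$ precisely as the family approaches the singular curve, so Propositions \ref{adjacent} and \ref{bound} give no control near $t=0$; one must separately rule out (or account for) shrinking-chord degenerations $t_i-t_j\to 0$, simultaneous and triple collisions, and, when you iterate the formula, degenerations occurring in the presence of the remaining double points. None of this is carried out.

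The second and more decisive problem is the $k=3$ step. Even granting a correct jump formula, the contributions you must kill are not honest points of $M$ but limits of $M$-configurations in which three disjoint pairs of the six points collide at the three double points; such limits do exist geometrically (each of the three lines through an antipodally-indexed pair degenerates to a line through the corresponding double point, nearly tangent to $S^3$, and three such lines can perfectly well be concurrent), so ``the six locations are not pairwise distinct, hence not in $M$, hence the count is zero'' does not follow. Your dimension count lands exactly on the borderline case $6=3\times 2$, where all six parameters are consumed and the saturated stratum must be analyzed rather than dismissed. The paper's proof is built to avoid this borderline: using Proposition \ref{inv} it first fixes a base point $p$ on the curve away from the crossings and computes $\mathfrak{K}$ as the intersection number of $M$ with $Q^p_\gamma$, so only \emph{five} points remain free; then the pigeonhole is strict (some crossing is never ``doubled''), and an explicit interpolation of the resolutions makes the terms of the alternating sum cancel in pairs at that crossing. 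Your proposal is missing either this base-point reduction or a genuine treatment of the saturated boundary stratum, and without one of the two the vanishing on three double points is not established.
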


\begin{proof}
Fix a stereographic inclusion $\R^3\subseteq S^3$.

Let $\gamma: \R\to \R^3$ be a smooth $\Z$-periodic parameterization with non-vanishing derivative for an immersed circle in $\R^3$ with exactly three transverse self-intersection points $p_1,p_2,p_3$. Now, let $v_1,v_2,v_3$ be nonzero vectors such that both tangent lines to $\gamma$ at $p_i$ are orthogonal to $v_i$ for each $i$. Take small disjoint closed spherical neighborhoods $N_1, N_2, N_3$ centered around the points $p_1,p_2,p_3$ respectively. We can assume that these neighborhoods are small enough that for each ball $N_i$, the curve $\gamma$ intersects the boundary transversely at exactly four points, and the two intervals in $S^1$ between these points map to intersecting strands of the curve $\gamma$, and at no point in $N_i$ is the tangent line of $\gamma$ parallel to $v_i$. Lastly, at each point $p_i$, we make an arbitrary distinction between the two intersecting strands in $N_i$, calling them strand 1 and strand 2. We choose six smooth periodic functions, labeled $b_{i,j}: \R\to \R_{\geq 0}$ where $i$ ranges from 1 to 3, and corresponds to the indices of the self-intersection points, and $j$ ranges from 1 to 2, denoting the strand. We require that these functions have support in the pre-image of their corresponding strand, and are nonzero at the points that map to $p_i$. Now, given an $\varepsilon > 0$, we define $\gamma_{s_1,s_2,s_3}(t) = \gamma(t) + \varepsilon \sum_{i = 1}^3 s_i(b_{i,1}(t) - b_{i,2}(t))v_i$. We will think of $(s_1,s_2,s_3,t)\mapsto \gamma_{s_1,s_2,s_3}(t)$ as a function $[-1,1]^3\times \R\to S^3$. We will fix $\varepsilon$ to be small enough that the following properties hold. 
\begin{itemize}
\item[1)] If $\gamma(t)\in N_i$ then $\gamma_{s_1,s_2,s_3}(t) \in N_i$.
\item[2)] For each $i = 1,2,3$, if $\gamma(t)$ and $\gamma(t')$ are both in $N_i$, and $\gamma_{s_1,s_2,s_3}(t) = \gamma_{s_1',s_2',s_3'}(t)$ and we have $s_is_i' > 0$, then $t = t'\pmod{\Z}$ and $s_i = s_i'$.
\end{itemize}

We will call $\gamma_{s_1,s_2,s_3}$ the resolution of the self intersecting curve $\gamma$. To prove that $\mathfrak{K}$ is a rank 2 invariant, we must prove that the following formula holds. 

\begin{equation}\sum_{s_1,s_2,s_3\in \{-1,1\}} s_1s_2s_3\cdot \mathfrak{K}(\gamma_{s_1,s_2,s_3}) = 0 \label{vass}\end{equation}

 Select a point $p$ on $\gamma$ away from the neighborhoods $N_1,N_2,N_3$. If $Q_{\gamma_{s_1,s_2,s_3}}^p$ denotes the submanifold of $Q_{\gamma_{s_1,s_2,s_3}}$ consisting of $6$-tuples where the first term is $p$, then as a consequence of Proposition \ref{inv}, we see that the oriented intersection number of $Q_{\gamma_{s_1,s_2,s_3}}^p$ and $M$, which we will denote by $[M\cap Q_{\gamma_{s_1,s_2,s_3}}^p]$, equals $\mathfrak{K}(\gamma_{s_1,s_2,s_3})$. To prove our proposition, we will describe a way to perturb the manifolds $Q_{\gamma_{s_1,s_2,s_3}}^p$ so that they overlap in a nice way that makes Equation \ref{vass} obvious.

Let $t_0$ be a real number with $\gamma(t_0) = p$. Now, if we fix a 5-tuple $ (t_1,...,t_5)$ of real numbers, we define $N_i$ to be ``doubled'' if the two intersecting segments of $\gamma$ passing through $N_i$ each contain at least one point from $\{\gamma(t_1),...,\gamma(t_5)\}$. Now, let $\theta_1,\theta_2,\theta_3$ be smooth functions which map $5$-tuples $ (t_1,...,t_5)$ of real numbers into the closed interval $[0,1]$, such that if $N_i$ is not doubled with respect to $(t_1,...,t_5)$, then $\theta_i(t_1,...,t_5) = 0$, and if both points in $S^1$ that map to $p_i$ are represented in $\{t_1,...,t_5\}$, then $\theta_i(t_1,...,t_5) = 1$.

We are now ready to define, for any triple $s_1,s_2,s_3\in \{-1,1\}$, a perturbation of $Q_{\gamma_{s_1,s_2,s_3}}^p$. We will call our perturbation $\tilde{Q}_{\gamma_{s_1,s_2,s_3}}^p$ and we produce it by moving each $6$-tuple of the form $(p,\gamma_{s_1,s_2,s_3}(t_1),...,\gamma_{s_1,s_2,s_3}(t_5))$ to the 6-tuple \begin{equation}(p, \gamma_{s_1\cdot \theta_1(t_1,...,t_5), ..., s_3\cdot \theta_3(t_1,...,t_5)}(t_1), ..., \gamma_{s_1\cdot \theta_1(t_1,...,t_5), ..., s_3\cdot \theta_3(t_1,...,t_5)}(t_5))\label{tuple}\end{equation} via the isotopy $(p, g_1(\alpha,t_1,...,t_5), ..., g_5(\alpha,t_1,...,t_5))$, $\alpha\in [0,1]$ where $$g_i(\alpha,t_1,...,t_5) =  \gamma_{s_1\cdot (1-\alpha(1-\theta_1(t_1,...,t_5))), ..., s_3\cdot (1-\alpha(1-\theta_1(t_1,...,t_5)))}(t_i)$$ For convenience, we will write $\tilde{Q}_{\gamma_{s_1,s_2,s_3}}^p(t_1,...,t_5)$ as shorthand to denote the 6-tuple in (\ref{tuple}).

One way to describe what is geometrically happening here is to think of this isotopy as bringing the submanifold $Q_{\gamma_{s_1,s_2,s_3}}^p$ as close to the subset of 6-tuples that lie on $\gamma$ as we possibly can. Indeed, if there are no doubled neighborhoods, then our perturbed 6-tuple consists only of points that lie in $\gamma$. In order to avoid points coinciding with each other, we have introduced the notion of doubled neighborhoods, and selected our perturbation so that when a doubled neighborhood occurs, the points move back away from $\gamma$ to the resolved arc. We will now describe how this perturbation causes our submanifolds to overlap nicely.

By the reverse pigeonhole principle, every 5-tuple $(t_1,...,t_5)$ has at least one non-doubled neighborhood $N_i$. Suppose $s_1',s_2',s_3'$ are such that we have $s_i  = -s_i'$ and $s_j = s_j'$ for $j\neq i$. Then since $N_i$ is not doubled, we have that  $\tilde{Q}_{\gamma_{s_1,s_2,s_3}}^p(t_1,...,t_5) = \tilde{Q}_{\gamma_{s_1',s_2',s_3'}}^p(t_1,...,t_5)$. This demonstrates that our perturbed manifolds overlap. If $M$ intersects $\tilde{Q}_{\gamma_{s_1,s_2,s_3}}^p$ transversely at the point $\tilde{Q}_{\gamma_{s_1,s_2,s_3}}^p(t_1,...,t_5)$ then it also intersects $\tilde{Q}_{\gamma_{s_1',s_2',s_3'}}$ transversely at the point $\tilde{Q}_{\gamma_{s_1',s_2',s_3'}}^p(t_1,...,t_5)$, and because the parameterization by $t_1,...,t_5$ determines the orientation, the sign of these intersections will be the same. Furthermore, $s_1s_2s_3 = -s_1's_2's_3'$ so the signs for the sum in Equation \ref{vass} cancel. Therefore, all we need to do to prove Equation \ref{vass} is justify the claim that the isotopy $(p, g_1(\alpha,t_1,...,t_5), ..., g_5(\alpha,t_1,...,t_5))$ described above preserves the oriented intersection number of each  perturbed manifold with $M$.  

For convenience, we will write $$g_0(\alpha,t_1,...,t_5) = \gamma_{s_1\cdot (1-\alpha(1-\theta_1(t_1,...,t_5))), ..., s_3\cdot (1-\alpha(1-\theta_1(t_1,...,t_5)))}(t_0) = p$$

To prove that the oriented intersection number is preserved, we can use the same argument as in Propositions \ref{bound} and \ref{inv}. All we need for this argument to work is to show that if two terms of a 6-tuple of the form $(g_0(\alpha,t_1,...,t_5), ..., g_5(\alpha,t_1,...,t_5))$ are sufficiently close together, then the two terms of minimal distance are adjacent with respect to the cyclic ordering. To prove this, observe that the derivative of $ \gamma_{s_1\cdot (1-\alpha(1-\theta_1(t_1,...,t_5))), ..., s_3\cdot (1-\alpha(1-\theta_1(t_1,...,t_5)))}$ is always nonzero, and $g_i(\alpha,t_1,...,t_5) \neq g_j(\alpha,t_1,...,t_5)$ when $t_i \neq t_j \pmod{\Z}$. Now suppose for the sake of contradiction we have a sequence of pairs $(x_1,x'_1), (x_2,x'_2), ...$ each of which are the pair of minimal distance in some 6-tuple $(p, g_1(\alpha,t_1,...,t_5), ..., g_5(\alpha,t_1,...,t_5))$, and each of which are cyclically non adjacent in that 6-tuple, and $\lim_{n\to \infty} |x_n-x_n'| = 0$. Let $r_n,r'_n\in \R/\Z$ and $\alpha_n\in [0,1]$ be such that $$\gamma_{s_1\cdot (1-\alpha_n(1-\theta_1(t_1,...,t_5))), ..., s_3\cdot (1-\alpha_n(1-\theta_1(t_1,...,t_5)))}(r_n) = x_n $$ and $$ \gamma_{s_1\cdot (1-\alpha_n(1-\theta_1(t_1,...,t_5))), ..., s_3\cdot (1-\alpha_n(1-\theta_1(t_1,...,t_5)))}(r'_n) = x'_n $$ Now, if  the distance between $r_n$ and $r'_n$ goes to zero as $n\to\infty$, then the limit points of $x_n$ will be points where the derivative of $\gamma_{s_1\cdot (1-\alpha(1-\theta_1(t_1,...,t_5))), ..., s_3\cdot (1-\alpha(1-\theta_1(t_1,...,t_5)))}$ is zero for some $\alpha$ which yields a contradiction. Otherwise, there will be a subsequence with the distance between $r_{n_i}$ and $r'_{n_i}$ bounded below, and by compactness, we get $g_i(\alpha,t_1,...,t_5)  = g_j(\alpha,t_1,...,t_5)$ with $t_i \neq t_j$ which yields a contradiction.
\end{proof}

\begin{prp}
If $\gamma$ is an unknot, $\mathfrak{K}(\gamma) = 0$.
\end{prp}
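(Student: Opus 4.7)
The plan is to exploit the isotopy invariance of $\mathfrak{K}$ established in Proposition \ref{inv} to reduce to a single convenient unknot representative, and then to verify directly that the intersection $M\cap Q_\gamma$ is empty for that representative.

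Concretely, I would work in an affine chart of $\RP^4$ in which $B^4$ is a round $4$-ball and $\partial B^4 = S^3$ is the unit sphere of $\R^4$, and take $\gamma$ to be an analytic parameterization of a great circle, say $\gamma(t) = (\cos 2\pi t, \sin 2\pi t, 0, 0)$, which sits inside the $2$-plane $\Pi = \{x_3 = x_4 = 0\}$. This $\gamma$ is analytic and $\Z$-periodic with non-vanishing derivative, and it represents the unknot. By Proposition \ref{inv} it therefore suffices to show $\mathfrak{K}(\gamma) = 0$ for this particular curve.

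The key observation is that $M \cap Q_\gamma$ is outright empty. Indeed, any 6-tuple $(\gamma(t_1), \ldots, \gamma(t_6))$ with $t_1 < \cdots < t_6 < t_1 + 1$ consists of six points $P_i = \gamma(t_i)$ sitting in cyclic order around $\gamma$. The three main diagonals $P_1P_4$, $P_2P_5$, $P_3P_6$ all lie in the $2$-plane $\Pi$, and for each pair of diagonals the endpoints alternate around $\gamma$, so each pair of chords crosses in the open disk $D = \Pi \cap B^4$ bounded by $\gamma$. Since the three diagonal lines are coplanar, if they are to be concurrent then their common meeting point must equal any pairwise intersection, and so it must lie in $D$, in particular strictly inside $B^4$. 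But the definition of $M$ requires the concurrency point to lie in $\RP^4 \setminus B^4$, a contradiction. (Note that the diagonals cannot meet only at infinity in $\Pi$, because two of them being parallel is easily ruled out by the strict cyclic ordering $\theta_1 < \theta_2 < \theta_3 < \theta_4 < \theta_5 < \theta_6$ of the angular coordinates on $\gamma$.)

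Consequently $M \cap Q_\gamma = \emptyset$, which immediately gives $\mathfrak{K}(\gamma) = [M \cap Q_\gamma] = 0$. There is essentially no real obstacle in this argument; the only point of care is confirming that the pairwise crossings of the main diagonals of a convex cyclic hexagon occur strictly inside the bounding disk, and this is immediate from the cyclic arrangement of the six vertices on $\gamma$.
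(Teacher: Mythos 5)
Your proof is correct and takes essentially the same approach as the paper: reduce to a great circle using the isotopy invariance from Proposition \ref{inv}, then show that $M\cap Q_\gamma$ is empty so that $\mathfrak{K}(\gamma)=[M\cap Q_\gamma]=0$. The paper states the emptiness as ``a concurrency point outside $B^4$ forces some line to join cyclically adjacent points,'' which is just the contrapositive of your observation that the main-diagonal pairing forces the concurrency point into the open disk inside $B^4$; your crossing-chords argument supplies the detail the paper leaves implicit.
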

\begin{proof}
By Proposition \ref{inv}, we only need to prove that $\mathfrak{K}(\gamma) = 0$ for $\gamma$ being a great circle. For this choice of $\gamma$, the manifold $Q_\gamma$ does not intersect $M$ at all because if three lines intersecting outside of $B^4$ in $\RP^4$ each pass through two points on a great circle in $\partial B^4$, at least one of those lines will have its intersection points adjacent along the great circle, which means the order of the points will be wrong and the 6-tuple will not be in $M$.
\end{proof}

\begin{prp}
If $\gamma$ is a trefoil knot, $\mathfrak{K}(\gamma)$ is a nontrivial element of $H_1(Q_\gamma)$.
\end{prp}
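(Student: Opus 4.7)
The plan is to exhibit, on an explicit torus-knot model of the trefoil, a closed one-parameter family in $M\cap Q_\gamma$ whose homology class generates $H_1(Q_\gamma)\cong\Z$, and then to argue that this family is responsible for the entire intersection class.

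I would take the $(2,3)$-torus knot parameterized as
\[
\gamma(t)=\tfrac{1}{\sqrt 2}\bigl(\cos 4\pi t,\sin 4\pi t,\cos 6\pi t,\sin 6\pi t\bigr)\in S^3\subseteq\R^4,
\]
and for each $s\in\R/\Z$ form the equi-spaced 6-tuple $X(s)=\bigl(\gamma(s),\gamma(s+\tfrac16),\gamma(s+\tfrac26),\gamma(s+\tfrac36),\gamma(s+\tfrac46),\gamma(s+\tfrac56)\bigr)$. The elementary identity $\gamma(u)-\gamma(u+\tfrac12)=\sqrt{2}\,(0,0,\cos 6\pi u,\sin 6\pi u)$ shows that the three opposite-pair chords $X(s)_i-X(s)_{i+3}$ for $i=1,2,3$ are all scalar multiples of the common vector $(0,0,\cos 6\pi s,\sin 6\pi s)$. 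Hence the three corresponding lines in $\RP^4$ are distinct, parallel in $\R^4$, and meet at a common point at infinity, so $X(s)\in M\cap Q_\gamma$ for every $s$. Under the canonical diffeomorphism $Q_\gamma\cong\R^5\times S^1$ of Proposition \ref{diffeoQ}, the loop $s\mapsto X(s)$ sits over the barycenter of the 5-simplex in $\R^5$ and wraps exactly once around the $S^1$-factor, so $[X]$ generates $H_1(Q_\gamma)$.

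To finish, I would need to show $\mathfrak{K}(\gamma)=\pm[X]$, i.e.\ that no additional components of $M\cap Q_\gamma$ appear that could cancel $[X]$. The direct route is to verify that $M$ is transverse to $Q_\gamma$ along $X$: in $T_{X(s)}Q_\gamma$ the sum $\sum_i \partial_{t_i}$ is tangent to $X$, and the remaining five ``spacing'' directions should produce five linearly independent first-order violations of the three concurrence conditions, which reduces to checking the nondegeneracy of an explicit $5\times 5$ matrix built from $\gamma$ and $\gamma'$. Once transversality along $X$ is established, any additional components of $M\cap Q_\gamma$ lie in the compact set $K_{\tau(\gamma)}$ of Proposition \ref{bound} and can be removed by the small isotopy of $M$ used in Proposition \ref{inv} without affecting $[X]$.

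The main obstacle is this transversality/cancellation step: the $\Z/6$ symmetry of the torus knot makes it plausible that extra intersection components occur in symmetric groups whose total $S^1$-winding could conceivably cancel $[X]$. A cleaner fallback, if the direct computation proves unwieldy, is to perturb $\gamma$ to a generic trefoil $\gamma_\varepsilon$ in its isotopy class, apply Proposition \ref{inv} to transfer the invariant as $\mathfrak{K}(\gamma)=\mathfrak{K}(\gamma_\varepsilon)$, and use the uniform lower bound on $\tau$ along the perturbation from Proposition \ref{bound} to keep the transverse intersection confined to a small neighborhood of the persistent deformation of $X$, so that its $S^1$-degree is forced to remain $\pm 1$.
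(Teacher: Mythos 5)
You have correctly located the key geometric input: on the standard torus-knot model the equi-spaced tuples $X(s)$ have their three long chords parallel in $\R^4$, hence concurrent at a point of $\RP^4\setminus B^4$, so the loop $s\mapsto X(s)$ lies in $M\cap Q_\gamma$ and generates $H_1(Q_\gamma)$. This is the same starting point as the paper. But the proof is not complete, and the gap is exactly the step you flag yourself: nothing in your argument rules out other components of $M\cap Q_\gamma$ whose total winding cancels $[X]$. Transversality of $M$ along $X$ does not help with this, and your fallback is not sound: Proposition \ref{inv} only says the class $\mathfrak{K}$ is unchanged under isotopy and that the intersection stays inside the compact set $K_{\tau}$; it gives no confinement of the intersection to a neighborhood of the deformed $X$, so the ``$S^1$-degree forced to remain $\pm1$'' claim has no justification. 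Indeed the paper explicitly remarks that showing $X$ (at a fixed basepoint) is the \emph{only} intersection point appears to require solving intractable trigonometric equations, likely by computer — so the direct route you propose is precisely the computation the paper avoids.

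The missing idea is a parity argument exploiting the symmetries of this particular parameterization. The paper fixes $p=\gamma(0)$, replaces the $H_1$ class by the integer intersection number of $M$ with the slice $Q^p_\gamma$ of 6-tuples whose first entry is $p$, and introduces two involutions of $C_6(S^3)$ preserving $Q^p_\gamma\cap M$: $\tau_1$ negates the second and fourth coordinates of $\R^4$ and then reverses the tuple as $(x_1,x_6,x_5,x_4,x_3,x_2)$, while $\tau_2$ negates the third and fourth coordinates and then cyclically shifts by three. One checks that the only point of $Q^p_\gamma\cap M$ fixed by both involutions is the equi-spaced tuple, and that this intersection is transverse there; perturbing $M$ only where the $(\Z/2\Z)^2$-action is free, the free orbits contribute evenly to the point count, so $|Q^p_\gamma\cap M'|$ is odd and hence $\mathfrak{K}(\gamma)\neq 0$, with no need to classify or cancel the unknown extra components. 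Your proposal contains no substitute for this counting-mod-2 mechanism, so as written it establishes only that $M\cap Q_\gamma$ contains a generator of $H_1(Q_\gamma)$, not that the intersection class itself is nonzero.
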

\begin{proof}
By Proposition \ref{inv}, we only need to prove that $\mathfrak{K}(\gamma)$ is a nontrivial element of $H_1(Q_\gamma)$ when $\gamma$ is the embedding of the trefoil in $S^3\subseteq \R^4$ given by $$\gamma(t) = \frac{1}{\sqrt{2}}(\cos(4\pi t),\sin(4\pi t),\cos(6\pi t),\sin(6\pi t))$$ Let $p = \gamma(0)$. Then we want to compute the oriented intersection number of $Q_\gamma^p$ with $M$. First, notice that $Q_\gamma^p\cap M$ contains the 6-tuple $(\gamma(0), \gamma(\frac{1}{6}),  ... , \gamma(\frac{5}{6}))$, and the intersection is transverse at this point. We claim that the remaining points, if they exist, must have an even contribution to $\mathfrak{K}$.  To prove this, we first define two actions of $(\Z/2\Z)$ on $\R^4$. Our actions will be denoted by $\tau_1$ and $\tau_2$, where $\tau_1$ negates the second and fourth coordinates in $\R^4$, and $\tau_2$ negates the third and fourth coordinates in $\R^4$. Now, we extend these two actions to $C_6(S^3)$. The action $\tau_1$ is first applied term-wise to each term in the 6-tuple, and then we apply the permutation $(x_1,...,x_6)\mapsto (x_1,x_6,x_5,x_4,x_3,x_2)$. The action $\tau_2$ is first applied term-wise, and then we apply the permutation $(x_1,...,x_6)\mapsto (x_4,x_5,x_6,x_1,x_2,x_3)$. Combining $\tau_1$ and $\tau_2$, we get a $(\Z/2\Z)^2$ action on both $\R^4$ and $C_6(S^3)$. Inside $C_6(S^3)$, the intersection $Q_\gamma^p\cap M$ is mapped to itself under $\tau_1$.  Let $(Q_\gamma^p \cap M)^{\tau_1}$ denote the subset of $Q_\gamma^p \cap M$ which consists of fixed points under $\tau_1$. We see that any such fixed point has $\gamma(1/2)$ as the 4-th term in the 6-tuple. This lets us conclude that $(Q_\gamma^p \cap M)^{\tau_1}$ is mapped to itself under $\tau_2$. Now, observe that if a point is in $Q_\gamma^p \cap M$ and is fixed by both $\tau_1$ and $\tau_2$, then it is the 6-tuple $(\gamma(0), \gamma(\frac{1}{6}),  ... , \gamma(\frac{5}{6}))$. If we take a small perturbation of $M$ to $M'$ which only modifies $M$ at points where the action of $(\Z/2\Z)^2$ is free and makes the intersection with $Q_\gamma^p$ transverse at these points, then the intersection $Q_\gamma^p\cap M'$ is transverse with finitely many points. These intersection points are acted on by the involution $\tau_1$. The fixed points of $\tau_1$ are acted on by $\tau_2$, which has only one fixed point. Therefore, $|Q_\gamma^p\cap M'| \equiv 1 \pmod{2}$. This proves that $\mathfrak{K}(\gamma)\neq 0$.  
\end{proof}

It is very likely, and intuitive from a geometric perspective, that $(\gamma(0), ..., \gamma(\frac{5}{6}))$ is the only element of $Q_\gamma^p\cap M$. Although counting the points of $Q_\gamma^p\cap M$ is a computationally finite exercise which simply involves solving some trigonometric equations, the equations in question are highly complicated. Without some clever algebraic or geometric trick which simplifies the computation, it is likely that any proof that $\{(\gamma(0), ..., \gamma(\frac{5}{6}))\} = Q_\gamma^p\cap M$ will be computer-assisted. I suspect that such a trick exists, but it has eluded me. 

\begin{prp}
$\mathfrak{K}$ is equal to the quadratic term of the Conway polynomial multiplied by some fixed nontrivial element of $H_1(Q_\gamma)$.
\end{prp}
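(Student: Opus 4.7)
The plan is to combine the three immediately preceding propositions with the classical classification of low-degree Vassiliev knot invariants. Using the canonical, continuously varying diffeomorphism $Q_\gamma \cong \R^5 \times S^1$ from Proposition \ref{diffeoQ}, I would fix once and for all a generator of $H_1(\R^5 \times S^1) \simeq \Z$ and transport it back to obtain a canonical generator $\xi_\gamma \in H_1(Q_\gamma)$ that depends coherently on $\gamma$. Writing $\mathfrak{K}(\gamma) = n(\gamma)\,\xi_\gamma$, the three propositions together say that $n$ is a $\Z$-valued Vassiliev invariant of order at most $2$, vanishing on the unknot, and nonzero on the trefoil.

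Next I would invoke the standard fact from Vassiliev theory that the space of rational-valued knot invariants of order at most $2$ is two-dimensional over $\Q$, with basis given by the constant invariant $1$ and the coefficient $a_2$ of $z^2$ in the Conway polynomial. Thus $n = c_0 + c_1 a_2$ for some $c_0, c_1 \in \Q$. Evaluating on the unknot, where $a_2$ also vanishes, forces $c_0 = 0$. Evaluating on the trefoil, where $a_2 = 1$, forces $c_1 = n(\text{trefoil})$, which is a nonzero integer by the preceding proposition. Hence $\mathfrak{K}(\gamma) = a_2(\gamma) \cdot (c_1 \xi_\gamma)$, exhibiting $\mathfrak{K}$ as $a_2$ multiplied by the fixed nontrivial element $c_1 \xi_\gamma \in H_1(Q_\gamma)$.

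The only substantive issue I anticipate is a bookkeeping one: verifying that the generator $\xi_\gamma$ really is transported consistently as $\gamma$ varies over all knot types, so that the decomposition $n = c_0 + c_1 a_2$ is not accompanied by an ambiguous sign or normalization. This is ensured by the continuous dependence of the diffeomorphism of Proposition \ref{diffeoQ} on the parameterization, combined with the isotopy invariance established in Proposition \ref{inv}; connectedness of the space of parameterizations of a given knot type then pins down the generator uniquely. Once this is in place, the argument reduces to citing the Bar-Natan-type classification and evaluating on the two test cases already computed.
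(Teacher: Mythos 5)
Your proposal is correct and follows essentially the same route as the paper: the paper's proof is precisely the observation that rank~2 invariants vanishing on the unknot and nonvanishing on the trefoil are exactly the nontrivial multiples of the quadratic Conway coefficient, applied to the three preceding propositions, with the identification $H_1(Q_\gamma)\simeq\Z$ coming from the canonical diffeomorphism of Proposition \ref{diffeoQ}. Your additional care about transporting the generator $\xi_\gamma$ coherently is a reasonable fleshing-out of what the paper leaves implicit, not a different argument.
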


\begin{proof}
Nontrivial multiples of the quadratic term of the Conway polynomial are characterized by being rank 2 invariants which are trivial on the unknot and nontrivial on the trefoil. \cite{IntroToVass}
\end{proof}

\section{Constructing the Inscribed Trefoil}

In this section, we restrict our attention to a fixed analytic parameterization $\gamma : \R\to \R^3$ of a knot which we can think of as a map $\R\to S^3$ by composing with the stereographic projection.  Furthermore, we will assume the knot parameterized by $\gamma$ has a nontrivial quadratic term for its Conway polynomial, and thus satisfies the criteria for Theorem 1.

We define $\Gamma$ to be the group of M\"obius transformations of $\R^3$. This is the group of transformations (which are defined $S^3\to S^3$ but we think of as transformations on $\R$ defined at all but at most one point) generated by translation, scaling by nonzero real constants, and the operation which consists of mapping to $S^3$ under the stereographic projection, applying an isometric rotation, and then projecting back to $\R^3$. 

We define a spherical trefoil knot to be any $6$-tuple of points in $\R^3$ that all lie on some sphere, and when the points are cyclically connected by line segments, produce a trefoil knot.  

\begin{prp} \label{sphere}
Let $(x_1,...,x_6)$ be a spherical trefoil knot lying on a sphere $P$. Let $\mu$ be a M\"obius transformation that does not take $P$ to a flat plane. Then $(\mu(x_1),...,\mu(x_6))$ is also a spherical trefoil knot. (But the handedness might be different from $(x_1,...,x_6)$.)
\end{prp}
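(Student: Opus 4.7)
The plan rests on the observation that $\mu(P)$ is a sphere (by hypothesis), so the six image points lie on a common sphere; the content of the proposition is that the polygonal path $K'$ through $\mu(x_1), \ldots, \mu(x_6)$ using chords is a trefoil knot. Because $\mu$ is a diffeomorphism of $S^3$, the image $\mu(K)$ of the original polygonal trefoil $K$ is automatically a trefoil knot in $S^3$ (its mirror if $\mu$ reverses orientation, which is still a trefoil). The task reduces to exhibiting an isotopy in $S^3$ from $\mu(K)$ to $K'$.

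I would construct the isotopy via a one-parameter family of closed curves $K_q$ indexed by an apex point $q\in S^3$, where the $i$-th edge of $K_q$ is the arc of the circle through $\mu(x_i),\mu(x_{i+1}),q$ that avoids $q$. The choice $q=\mu(\infty)$ recovers $\mu(K)$ (each M\"obius image of a chord is an arc of a circle through $\mu(\infty)$), and $q=\infty$ recovers $K'$ (chords being arcs of circles through $\infty$). A useful alternative description is that stereographic projection from $q$ turns each $K_q$ into an honest polygonal path through the images of the $\mu(x_i)$ inscribed in the image-sphere of $\mu(P)$, so the family is really a continuous deformation of polygonal paths with a moving inscribing sphere. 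Both $\mu(\infty)$ and $\infty$ lie in $S^3\setminus\mu(P)$. When they lie in the same component of this complement (equivalently, when $\mu^{-1}(\infty)$ is outside $P$), I would connect them by a path in that component and argue that $K_q$ remains a simple closed curve, preserving the trefoil knot type. When they lie in opposite components, I would first precompose $\mu$ with the inversion $\rho_P$ in $P$: since $x_i\in P$ for each $i$, this fixes every $x_i$ and leaves $K'$ unchanged, but reverses orientation and swaps the two sides of $P$, reducing to the same-component case. The orientation reversal introduced by $\rho_P$ is what accounts for the proposition's allowance that handedness may differ.

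The main obstacle is verifying the simplicity claim for $K_q$ along the interpolating path. The set of ``bad'' apex points, where two non-adjacent arcs of $K_q$ meet off their shared endpoints, is closed of codimension at least one in $S^3\setminus\mu(P)$, so the knot type of $K_q$ is locally constant away from it. The real work is to show that there exists a path from $\mu(\infty)$ to $\infty$ within a single connected component of the good region. This would exploit the geometric constraint that all six arcs of $K_q$ lie in the closure of the opposite component of $S^3\setminus\mu(P)$ (a 3-ball), together with the fact that $K_{\mu(\infty)}=\mu(K)$ is already a simple trefoil, to control the combinatorial structure of the arcs near any potential wall and rule out wall-separation between the two endpoints.
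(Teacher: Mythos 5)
Your interpolation scheme---deforming from $\mu(K)$ to the chord hexagon $K'$ through the family $K_q$ of circular-arc hexagons with moving apex $q$, and handling the case where $\mu(\infty)$ and $\infty$ lie on opposite sides of $\mu(P)$ by precomposing with the inversion $\rho_P$ (which fixes the $x_i$ and accounts for the possible change of handedness)---is sound in outline, and, as your own stereographic-projection remark shows, it is essentially a reparametrization of the paper's argument: projecting from $q$ turns $K_q$ into the straight-chord hexagon on a Möbius image of $P$, so a path of apexes is the same data as a path of Möbius transformations whose pole stays off $P$. The problem is that the central claim is left unproved. You note that the bad set of apexes (where two non-adjacent arcs meet) is closed and of codimension at least one, so the knot type is locally constant on the good region, and you then declare that ``the real work'' is to show that no wall separates $\mu(\infty)$ from $\infty$---but you offer only the intention to ``control the combinatorial structure of the arcs near any potential wall.'' That is not an argument. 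A codimension-one wall can perfectly well disconnect the good region; indeed, crossing such a wall is exactly the mechanism by which the knot type of $K_q$ could change, so nothing in the proposal rules out the separation you need to rule out. (The codimension claim itself is also asserted rather than proved, but that is secondary.)

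The paper closes precisely this gap with one observation that is missing from your proposal: if two non-adjacent edges of the (projected) chord hexagon ever cross, the four endpoints involved are coplanar; since they lie on a genuine round sphere (the pole, equivalently your apex, never lies on $P$, so the image of $P$ is never a plane), coplanarity of those four points is equivalent to their lying on a circle; and concircularity of four points is a Möbius-invariant condition, hence constant along the deformation. At a first crossing the coplanarity status of some quadruple would have to change, which is therefore impossible. Thus the bad set is never met along the path at all, and no connectivity analysis of the good region is needed; the connectedness used is only that of the relevant family of transformations, with the mirror (or your $\rho_P$) disposing of the inside-out case. If you insert this invariance-of-concircularity step into your framework, your proof goes through and coincides in substance with the paper's; without it, the proposal has a genuine gap at its decisive step.
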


\begin{proof}
First we prove the result for M\"obius transformations that do not turn the sphere inside out. The set of such M\"obius transformations is connected, so we just need to show that as a M\"obius transformation varies and the point of inversion does not pass through $P$, the knot type of the polygonal path generated by a sequence of points on $P$ does not change. If the knot type were to change, there would need to be a time at which two line segments in the polygonal path cross. For this to happen, a 4-tuple of points would need to be coplanar when they were not previously. However, the point of inversion is not on $P$ which implies that our coplanar 4-tuple lies on a circle. However, M\"obius transformations preserve circles so we arrive at a contradiction because the 4-tuple would also have to lie on a circle in $P$ and be coplanar originally. The same argument demonstrates that we now only need to show that there is some M\"obius transformation that turns $P$ inside out and produces a spherical trefoil. The mirror image transformation is such an example.
\end{proof}

We define a closed subset $M_0\subset M$ to be the subset of $M$ consisting of $6$-tuples for which all six points lie on some circle in $S^3$. We also define a ``stereographic trefoil'' to be a $6$-tuple of points in $S^3$ which lie on some 2-sphere and which form a spherical trefoil knot under some choice of stereographic projection to $\R^3$. Due to Proposition \ref{sphere}, any stereographic trefoil yields either a spherical trefoil knot or a coplanar set of points when stereograpically projected to $\R^3$ under an arbitrary choice of stereographic projection. 

\begin{prp}\label{mprime}
For any neighborhood $U$ of the manifold $M$, there is a small isotopic perturbation $M'$ inside $U$ such that $M$ and $M'$ coincide exactly on $M_0$ and every point in $M'\setminus M_0$ is a stereographic trefoil. 
\end{prp}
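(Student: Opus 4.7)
The plan is to analyze the local geometry of $M$ near its degenerate subset $M_0$, construct a perturbation there, and extend globally. The key preliminary observation is that every point $q \in M \setminus M_0$ already has its six points lying on a 2-sphere in $\partial B^4$: the three chords through $p$, being concurrent but not coplanar, span a 3-plane in $\RP^4$, and this 3-plane meets $\partial B^4$ in a 2-sphere containing all six intersection points. The content of the proposition thus lies in arranging that the cyclic hexagon on this 2-sphere is a trefoil (rather than an unknot or a self-intersecting polygon) after a small isotopic perturbation that fixes $M_0$.

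First I would analyze the normal bundle $N$ of $M_0$ in $M$. Since coplanarity of three concurrent lines in $\RP^4$ is a codimension-2 condition, $N$ has rank 2. At a point $q_0 \in M_0$ the six points lie on a common circle in $\partial B^4$, cyclically ordered there according to the ``interleaved near/far'' pattern dictated by the cyclic complement graph, so that the planar hexagon drawn through them has a definite collection of self-crossings. Each of these becomes an over/under crossing of the three-dimensional hexagon after a small normal perturbation, with its sign depending linearly on $v \in N_{q_0} \cong \R^2$. By direct computation at a convenient symmetric representative of $M_0$ (for instance a round circle with three equally spaced chords through a symmetric external point $p$), I would verify that there is an open cone $\mathcal{T}_{q_0} \subset N_{q_0}$ of normal directions whose corresponding sign assignment reduces to a trefoil diagram. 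Continuity in $q_0$ then produces an open subbundle $\mathcal{T} \subset N$ of trefoil-producing directions.

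For the global perturbation, I would choose a smooth section $\sigma$ of $N$ landing in $\mathcal{T}$ and exponentiate it via a tubular neighborhood of $M_0$ to obtain a short isotopy supported near $M_0$. (Any Euler-class obstruction to finding a nowhere-zero such section can be handled by isolating a codimension-2 zero locus and perturbing there separately.) Scaled small enough to stay inside $U$, this isotopy fixes $M_0$ pointwise and pushes a neighborhood of $M_0$ in $M$ into the stereographic-trefoil locus. Away from this neighborhood, the set of non-trefoil configurations in $M$ is closed and borders the codimension-$\geq 1$ stratum of self-intersecting hexagons, so a further arbitrarily small isotopy (constructed via partition of unity) carries the remaining non-trefoil points across the self-intersection wall into the trefoil region. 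The composite yields $M'$ inside $U$, isotopic to $M$, coinciding with $M$ on $M_0$, and consisting entirely of stereographic trefoils on $M' \setminus M_0$.

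The principal technical obstacle is the explicit first-order analysis at $M_0$: writing down each crossing sign as a linear functional on $N_{q_0}$ and verifying that the resulting common ``trefoil'' cone $\mathcal{T}_{q_0}$ is nonempty. This is a concrete but somewhat delicate trigonometric computation, cleanly handled by exploiting the symmetry of a round representative of $M_0$ in the spirit of the $(\Z/2\Z)^2$-symmetry argument used earlier for the standard trefoil. Once that is in hand, the globalization is a standard partition-of-unity and transversality argument.
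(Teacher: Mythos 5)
There is a genuine gap, and it comes from a misreading of what the points of $M$ actually look like. No point of $M$ is an embedded hexagon: for every configuration in $M$, each pair of ``opposite'' edges of the cyclic hexagon intersects. Indeed, for the edges $x_1x_2$ and $x_4x_5$, the two lines $\ell_1\ni x_1,x_4$ and $\ell_2\ni x_2,x_5$ meet at $p$, so the four points are concyclic on the circle cut out by the plane $\mathrm{span}(\ell_1,\ell_2)$; since $p$ lies outside $B^4$ and the near/far sides of $T_p$ alternate around the hexagon, the cyclic order on that circle is $x_1,x_5,x_2,x_4$, which forces the chords $x_1x_2$ and $x_4x_5$ to cross (and similarly for the other two opposite pairs). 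So $M$ lies entirely in the degenerate (self-intersecting) locus, and in particular contains no stereographic trefoils at all. This breaks the core of your plan: you exponentiate a section of the normal bundle of $M_0$ \emph{in} $M$, so the resulting isotopy keeps points inside $M$ (or arbitrarily close to configurations of $M$ only in directions tangent to $M$), where nothing is ever a trefoil. The crossings you want to turn into over/under crossings are not resolved by moving transversally to $M_0$ within $M$; they can only be resolved by moving off $M$ in the ambient $C_6(S^3)$, which is exactly what the proposition demands. Relatedly, your later step (``carry the remaining non-trefoil points across the self-intersection wall by a partition of unity'') is where the entire content of the statement lives: since \emph{all} of $M\setminus M_0$ sits on the wall, you must exhibit, at every point of $M\setminus M_0$ (not just near $M_0$, where your cone analysis is done), a coherent, smoothly varying choice of which side of the wall is the trefoil side, compatible with the requirement that the perturbation vanish exactly on $M_0$. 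That global coherence is not supplied by a local computation at a symmetric point of $M_0$ plus continuity, because $M\setminus M_0$ is not a small neighborhood of $M_0$ and the choice must be made everywhere without monodromy.

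The paper resolves precisely this issue by an explicit construction rather than a normal-bundle/transversality argument: a point of $M\setminus M_0$ canonically determines the 2-sphere $P$ spanned by the three lines, the circle $C=T_p\cap P$, a preferred side of $P\setminus C$ (from the specified side of $T_p$), and an orientation of $C$ (from the ordering data encoded in the isomorphism to $\mathrm{Cyc}_6$). Rotating the preferred spherical cap by a small angle $\varepsilon>0$ about $C$ in the direction of that orientation simultaneously resolves the three degenerate opposite-edge crossings in a consistent way and produces a trefoil (the ``twisted triangular prism'' picture), and choosing $\varepsilon$ to depend smoothly on the point of $M$ and to vanish exactly on $M_0$ yields the desired $M'$ inside $U$. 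Your correct observations (cosphericality of points of $M\setminus M_0$, codimension $2$ of $M_0$ in $M$) are consistent with this, but your mechanism for producing trefoils does not work as stated, and the global choice of perturbation direction --- the heart of the matter --- is left unaddressed.
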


\begin{figure}[h]
\caption{A trefoil knot is formed by slightly rotating the top of a triangular prism.}
\centering
\includegraphics[scale = 0.75]{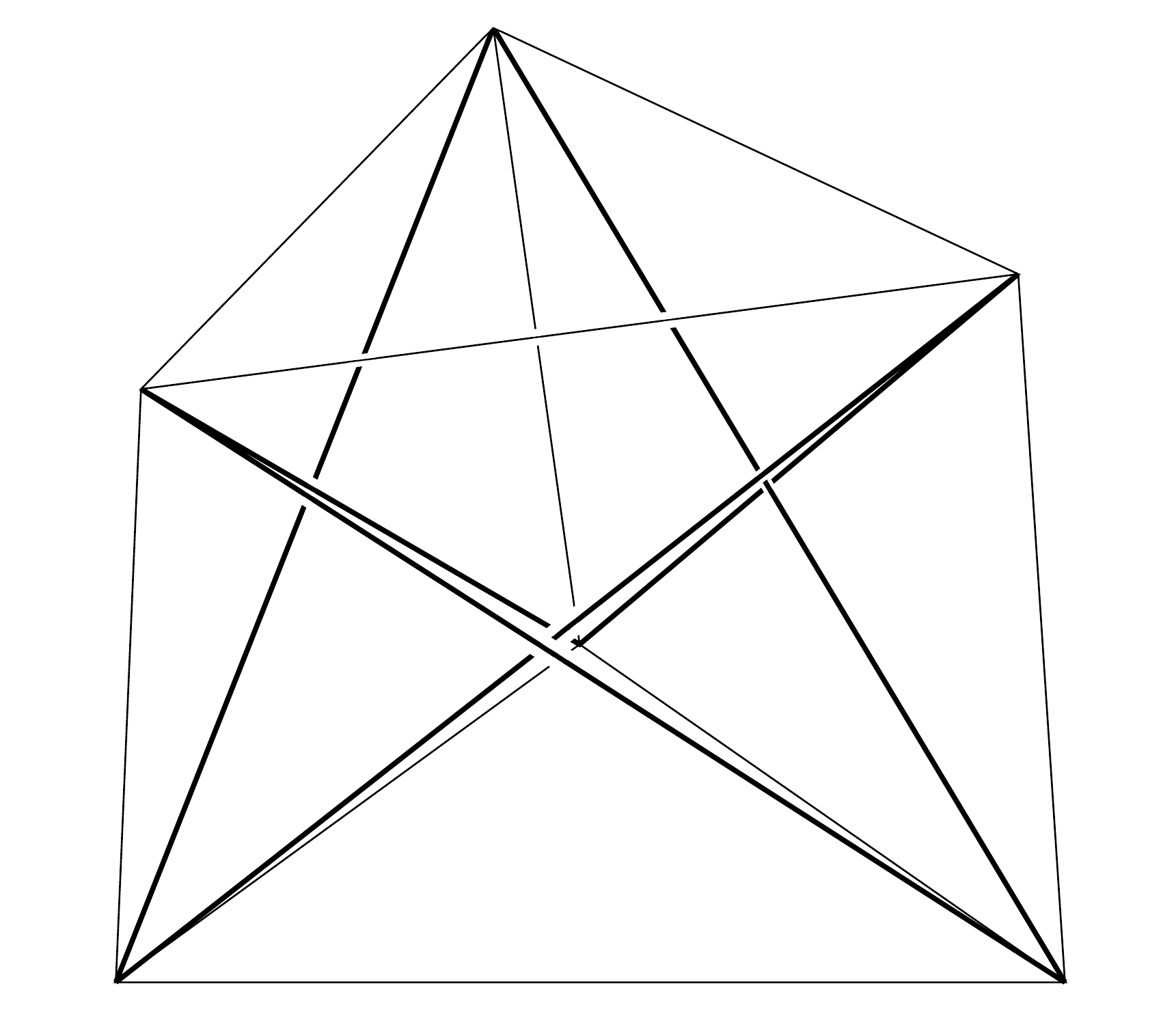}
\end{figure}

\begin{proof}
A point in $M\setminus M_0$ is determined by a point $p$ in $\RP^4\setminus B^4$ and an ordered triple of affinely independent lines $(\ell_1,\ell_2,\ell_3)$ passing through $S^3$ at points $(x_1,...,x_6)$, as well as a specified side of the partition determined by $T_p$ (see the proof of Proposition \ref{diffeoM}). Then the affine 3-space generated by the three lines passes through $S^3$ to form a 2-sphere $P$ containing all points of the 6-tuple.  The 2-spheres $T_p$ and $P$ intersect at a circle $C$ in $P$, and we are given a specified side of $P\setminus C$ induced by the specified side of $T_p$, as well as a specified orientation of $C$. This orientation of $C$ is induced by the intersection of the orientation of $T_p$ which comes from the specified side, and the orientation of the span of the lines induced by their order.  Now, for any $\varepsilon > 0$, we can define $(x_1',...,x_6')$ by isometrically rotating (by angle $\varepsilon$) the specified side of $P\setminus C$ in the direction of the orientation of $C$. For sufficiently small $\varepsilon$, this will yield a stereographic trefoil. See Figure 2. We select a small $\varepsilon$ smoothly dependent on a point in $M$ and equal to zero exactly on $M_0$. Now, applying the transformation $(x_1,...,x_6)\mapsto (x_1',...,x_6')$ at all points in $M\setminus M_0$ yields the desired perturbation $M'$.
\end{proof}

\begin{prp}\label{bigangle}
Suppose the analytic curve with non-vanishing derivative $\gamma$ is tangent to a plane $P$ at the point $\gamma(t_0)$. Let $h(t)$ denote the distance between $\gamma(t)$ and $P$. Let $\theta(t)$ denote the angle between the plane $P$ and the tangent line of $\gamma$ at $\gamma(t)$. Then there exist positive constants $\delta, \varepsilon, C$  so that if we have $|t - t_0| < \delta$ then we have $\theta(t) \geq C\cdot h(t)^{1-\varepsilon}$.
\end{prp}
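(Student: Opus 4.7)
The approach is to work in coordinates chosen so that $P$ is the $xy$-plane and $\gamma(t_0) = 0$, and then Taylor-expand the $z$-component of $\gamma$ at $t_0$. Writing $\gamma(t) = (x(t), y(t), z(t))$, tangency of $\gamma$ to $P$ at $t_0$ is equivalent to $z(t_0) = z'(t_0) = 0$. Since $\gamma$ is analytic, either $z$ vanishes identically in a neighborhood of $t_0$---in which case $h \equiv \theta \equiv 0$ near $t_0$ and the inequality holds vacuously---or $z$ has a zero of some finite order $k \geq 2$, giving $z(t) = a(t-t_0)^k + O((t-t_0)^{k+1})$ with $a \neq 0$.

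Assume we are in the finite-order case. For $|t-t_0|$ small enough, leading-order estimates yield $h(t) = |z(t)| \leq 2|a|\,|t-t_0|^k$ and $|z'(t)| \geq (k|a|/2)\,|t-t_0|^{k-1}$. The non-vanishing derivative hypothesis supplies an upper bound $|\gamma'(t)| \leq M$ near $t_0$, and the standard formula $\sin\theta(t) = |z'(t)|/|\gamma'(t)|$ together with the elementary inequality $\theta \geq \sin\theta$ on $[0, \pi/2]$ gives $\theta(t) \geq (k|a|/(2M))\,|t-t_0|^{k-1}$.

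I would then take $\varepsilon = 1/k$, the natural choice since $k(1-1/k) = k-1$ matches the exponent in the lower bound on $\theta$. With this choice,
\[
\frac{\theta(t)}{h(t)^{1-\varepsilon}} \geq \frac{(k|a|/(2M))\,|t-t_0|^{k-1}}{(2|a|)^{1-1/k}\,|t-t_0|^{k-1}} = \frac{k|a|^{1/k}}{2M \cdot 2^{1-1/k}},
\]
which is a positive constant $C$, valid for $|t-t_0|$ less than a $\delta$ chosen small enough that the leading-order Taylor estimates hold. There is no substantial obstacle here: the proposition essentially records the heuristic that near an order-$k$ tangency the height $h$ vanishes like $|t-t_0|^k$ while the slope vanishes only like $|t-t_0|^{k-1}$, so the slope dominates $h^{1-\varepsilon}$ for any $\varepsilon \in (0, 1/k]$. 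The only place analyticity is essential is in ruling out pathological oscillation of $z$ near $t_0$; a merely smooth curve tangent to a plane could in principle have $z$ with no finite order of vanishing, and the Taylor argument would break down.
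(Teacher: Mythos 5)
Your proof is correct and follows essentially the same route as the paper's: identify the order $n$ of vanishing of the height function at $t_0$, bound $h(t)$ above by a constant times $|t-t_0|^n$ and $\theta(t)$ below by a constant times $|t-t_0|^{n-1}$, and take $\varepsilon = 1/n$. Your version is merely more explicit (coordinates, the $\sin\theta = |z'|/|\gamma'|$ formula, and the degenerate case where $z$ vanishes identically, which the paper leaves tacit).
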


\begin{proof}
Let $n$ be the minimal natural number so that the $n$-th degree Taylor approximation of $\gamma$ at $t_0$ does not lie in $P$. Then, there exists a constant $C_1 > 0$ so that for $t$ sufficiently close to $t_0$, we have $h(t) \leq C_1 |t|^{n}$. From the derivative, we see that there exists a constant $C_2 > 0$ so that for $t$ sufficiently close to $t_0$, we have $\theta(t) \geq C_2 |t|^{n-1}$. We then have the inequality $(\theta(t)/C_2)^{1/(n-1)} \geq |t| \geq (h(t)/C_1)^{1/n}$ for $t$ sufficiently close to $t_0$, so we see that $\theta(t) \geq C (h(t))^{1 - \frac{1}{n}}$ for some constant $C>0$. Letting $\varepsilon = \frac{1}{n}$, and letting $\delta$ be the required closeness of $t$ and $t_0$, we have the desired inequality.
\end{proof}

\begin{prp}\label{smallangle}
Let $P$ be a plane in $\R^3$, and let $(a,b,c)$ be a triple of affinely independent points in $\R^3$. Let $\theta$ be the angle between $P$ and the plane spanned by $(a,b,c)$. For $x\in \R^3$, let $h(x)$ be the distance between $x$ and $P$. Let $r$ be the radius of the incircle for the triangle with vertices $a,b,c$. Then we have $$ \theta \leq \frac{h(a) + h(b) + h(c)}{r} $$
\end{prp}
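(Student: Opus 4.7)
The plan is to reduce the inequality to a one-dimensional statement about the signed distance to $P$. Let $\hat h : \R^3 \to \R$ denote a signed distance function to $P$, so $|\hat h(x)| = h(x)$ for every $x$. Then $\hat h$ is affine with unit gradient normal to $P$, and its restriction to the plane $Q$ spanned by $\{a,b,c\}$ is an affine function whose intrinsic gradient is the orthogonal projection of the unit normal of $P$ onto $Q$. By definition of the angle between the two planes, this projection has magnitude $\sin\theta$.

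The key step, and the one I would flag as the heart of the argument, is using the incircle to extract a chord of length $2r$ in the direction of steepest ascent of $\hat h|_Q$. Let $I$ be the incenter of $\triangle abc$ and let $\vec u \in Q$ be a unit vector in the direction of $\nabla_Q \hat h$ (any unit vector if $\theta = 0$). Since the incircle is inscribed in $\triangle abc$, both points $I \pm r \vec u$ lie in the triangle and are therefore convex combinations of $a,b,c$. Affineness of $\hat h$ then gives
\[
2 r \sin\theta \;=\; \hat h(I + r \vec u) - \hat h(I - r \vec u) \;\leq\; \max_{v \in \{a,b,c\}} \hat h(v) - \min_{v \in \{a,b,c\}} \hat h(v),
\]
and a short case analysis on the signs of $\hat h(a), \hat h(b), \hat h(c)$ shows the right-hand side is at most $h(a) + h(b) + h(c)$ (in each sign configuration, the difference of two numbers collapses to a single absolute value or a sum of two). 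Thus $\sin\theta \leq (h(a)+h(b)+h(c))/(2r)$.

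The remaining technical point is the conversion from $\sin\theta$ to $\theta$. On $[0,\pi/2]$ one has $\sin\theta \geq \theta/2$ (since $\sin\theta/\theta \geq 2/\pi > 1/2$ by concavity of sine), so
\[
\theta \;\leq\; 2 \sin\theta \;\leq\; \frac{h(a)+h(b)+h(c)}{r},
\]
as desired. I do not expect any serious obstacle beyond the chord-from-incircle observation in the middle paragraph; the sign case analysis and the final $\sin\theta$-to-$\theta$ step are both routine.
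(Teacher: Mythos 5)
Your proof is correct and takes essentially the same route as the paper's: both reduce the claim to $2r\sin\theta \leq h(a)+h(b)+h(c)$ and finish with $\theta \leq 2\sin\theta$ on $[0,\pi/2]$. The only cosmetic difference is that the paper obtains the middle inequality by orthogonally projecting the triangle and its inscribed incircle onto a line perpendicular to the intersection line $P\cap P'$, whereas you evaluate the (affine) signed distance to $P$ at the two endpoints of an incircle diameter in the gradient direction and bound by the max--min over the vertices; these are the same estimate in different clothing.
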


\begin{proof}
First, let $P'$ be the plane spanned by $a,b,$ and $c$. If $P$ and $P'$ are parallel, then the proposition is trivial, so assume $P$ and $P'$ intersect at some line $\ell$.  Then, for $x\in P'$ let $h'(x)$ be the distance in $P'$ between $x$ and $\ell$. We have that $h'(x)\sin(\theta) = h(x)$. Furthermore, $0\leq \theta \leq \pi/2$ so $\theta/\sin(\theta) \leq 2$. Therefore, it suffices to prove that $2r\leq h'(a) + h'(b) + h'(c)$. This inequality becomes obvious when one considers the orthogonal projection of $P'$ onto a line orthogonal to $\ell$. In such a projection, the triangle with vertices $a,b,c$ maps to an interval of length less than or equal to $h'(a) + h'(b) + h'(c)$, and this interval contains the projection of the incircle of the triangle which is of length $2r$, therefore, we have the inequality $2r\leq h'(a) + h'(b) + h'(c)$. Multiplying both sides by $\sin(\theta)/r$, and using $\theta \leq 2\sin (\theta)$, we get the desired inequality.  
\end{proof}

We define $Z\subseteq C_3(\R^3)$ to be the set of all triples of affinely independent points on $\gamma$ such that, at each of the points in the triple, $\gamma$ is tangent to the plane passing through the triple. 

\begin{prp}
Every point of $Z$ is isolated. 
\end{prp}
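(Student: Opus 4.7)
The plan is to derive a contradiction from a sequence of distinct triples in $Z$ converging to a given element, by playing Propositions \ref{bigangle} and \ref{smallangle} against each other. I would fix a putative non-isolated $(t_1^*, t_2^*, t_3^*) \in Z$, let $P$ be the plane through $(\gamma(t_i^*))_{i=1,2,3}$, write $h(x) = \mathrm{dist}(x, P)$, and assume for contradiction there is a sequence of distinct triples $(t_1^{(n)}, t_2^{(n)}, t_3^{(n)}) \in Z$ with $t_i^{(n)} \to t_i^*$. Denote $h_i^{(n)} = h(\gamma(t_i^{(n)}))$, $H^{(n)} = h_1^{(n)}+h_2^{(n)}+h_3^{(n)}$, let $P_n$ be the plane spanned by the $n$-th triple, and let $r_n$ be the inradius of that triangle.

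First I would apply Proposition \ref{bigangle} at each tangency point $t_i^*$ to obtain exponents $\epsilon_i \in (0,1)$ and constants $C_i, \delta_i > 0$ with $\theta(t) \geq C_i h(\gamma(t))^{1-\epsilon_i}$ for $|t-t_i^*| < \delta_i$, where $\theta(t)$ is the angle between $\gamma'(t)$ and $P$. Next, because the $n$-th triple lies in $Z$, each tangent $\gamma'(t_i^{(n)})$ sits in $P_n$, so $\theta(t_i^{(n)}) \leq \angle(P, P_n)$. Then Proposition \ref{smallangle} applied to $(\gamma(t_i^{(n)}))_i$ against $P$ yields $\angle(P, P_n) \leq H^{(n)}/r_n$, and $r_n$ is bounded below by a positive constant for large $n$ because $(\gamma(t_i^*))_i$ is affinely independent. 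Chaining these three estimates gives $C_i (h_i^{(n)})^{1-\epsilon_i} \leq H^{(n)}/r_n$ whenever $h_i^{(n)} > 0$; picking $j$ to maximize $h_j^{(n)}$, so that $h_j^{(n)} \geq H^{(n)}/3$, and rearranging produces a positive lower bound $h_j^{(n)} \geq (C_j r_n/3)^{1/\epsilon_j}$, contradicting $h_j^{(n)} \to 0$ unless all $h_i^{(n)}$ vanish.

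To close the remaining case $H^{(n)} = 0$, I would invoke the standing Theorem 1 hypothesis that the quadratic Conway coefficient is nontrivial: this rules out $\gamma$ being planar, so the analytic function $t \mapsto h(\gamma(t))$ is not identically zero and has only isolated zeros, the sole zero near $t_i^*$ being $t_i^*$ itself. That forces $t_i^{(n)} = t_i^*$ for every $i$ and contradicts distinctness of the triples. The main obstacle is the quantitative mismatch between the sublinear lower bound $\theta \gtrsim h^{1-\epsilon}$ available for analytic curves and the linear upper bound $\angle(P,P_n) \lesssim H$: it is precisely this strict exponent gap that forces $H$ away from $0$ and yields isolation.
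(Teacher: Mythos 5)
Your proof is correct and follows essentially the same route as the paper: it plays the sublinear lower bound $\theta \gtrsim h^{1-\varepsilon}$ of Proposition \ref{bigangle} against the linear upper bound of Proposition \ref{smallangle} at the point of the nearby triple farthest from $P$, which is exactly the paper's argument. The only difference is that you spell out the quantitative rearrangement and explicitly dispose of the degenerate case in which the nearby triple lies entirely in $P$ (via non-planarity and isolation of zeros of $h\circ\gamma$), a case the paper handles only implicitly through its standing fact that an analytic knot meets each plane in finitely many points.
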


\begin{proof}
By combining Propositions \ref{bigangle} and \ref{smallangle}, we see that if $(\gamma(t_1), \gamma(t_2), \gamma(t_3))\in Z$ spans a plane $P$ and $t_1',t_2',t_3'$ are sufficiently close to $t_1,t_2,t_3$, then if $k\in \{1,2,3\}$ is such that $\gamma(t_k')$ is maximally far from $P$, the angle between the tangent line to $\gamma$ at $t_k'$ and $P$ will be larger than the angle between the plane spanned by $\gamma(t_1'),\gamma(t_2'),\gamma(t_3')$ and $P$. This means that the triple $(\gamma(t_1'),\gamma(t_2'),\gamma(t_3'))$ cannot be in $Z$.
\end{proof}

As a corollary of this proposition, $Z$ is finite because it can be expressed as the set of zeroes of a multivariate analytic function, and such a set is locally connected \cite{analytic} \cite{subanalytic}. However, in the hopes that minimizing the use of analyticity will make it easier to generalize our arguments to larger classes of curves, we will avoid using the finiteness of $Z$. The only fact we will use about about analytically parameterized curves is that they intersect every plane at only finitely many points, and at those points, Proposition \ref{bigangle} is satisfied.  

\begin{prp}\label{rigid}
Let $R$ be the subset of $Q_\gamma$ consisting of 6-tuples which are coplanar in some plane $P$ and which contain a triple of points in $Z$.  Then, there exists a finite sequence of sets $R_1,...,R_n$ with $\bigcup_{i = 1}^n R_i = R$ and such that for each $i$, every point of $R_i$ is an isolated point of the set $R_i$. In particular, we can do this with $n = 20$.
\end{prp}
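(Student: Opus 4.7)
The plan is to index the covering by three-element subsets $S \subseteq \{1,2,3,4,5,6\}$. For each such $S$, I would let $R_S$ be the subset of $R$ consisting of 6-tuples $(\gamma(t_1),\ldots,\gamma(t_6))$ for which the sub-triple $(\gamma(t_i))_{i\in S}$ lies in $Z$. There are $\binom{6}{3} = 20$ such subsets, giving the stated bound, and since every 6-tuple in $R$ contains at least one triple in $Z$ by definition of $R$, we have $R = \bigcup_S R_S$. Label these sets $R_1,\ldots,R_{20}$ in any order.

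The main step is then to show each $R_S$ is discrete in itself. I would fix $S = \{i,j,k\}$, a point $(\gamma(t_1),\ldots,\gamma(t_6)) \in R_S$, and let $P$ be the plane spanned by $\gamma(t_i),\gamma(t_j),\gamma(t_k)$. Suppose a sequence in $R_S$ converges to this point. The corresponding sub-triples $(\gamma(t_i^{(n)}),\gamma(t_j^{(n)}),\gamma(t_k^{(n)}))$ then converge in $C_3(\R^3)$ to $(\gamma(t_i),\gamma(t_j),\gamma(t_k)) \in Z$. By the previous proposition, every point of $Z$ is isolated, so for large $n$ this sub-triple equals the limiting one; since $\gamma$ is injective on $[0,1)$, this forces $t_i^{(n)} = t_i$, $t_j^{(n)} = t_j$, $t_k^{(n)} = t_k$ for large $n$, and in particular the spanned plane is exactly $P$ along the tail.

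For each remaining index $\ell \notin S$, the point $\gamma(t_\ell^{(n)})$ must also lie in the fixed plane $P$. Here I would invoke the fact, noted in the paragraph immediately preceding Proposition \ref{rigid}, that an analytically parameterized curve meets any plane in only finitely many points; hence $\gamma^{-1}(P) \cap [0,1)$ is finite, so $t_\ell^{(n)}$ belongs to a finite set for large $n$ and converges to $t_\ell$, forcing $t_\ell^{(n)} = t_\ell$ eventually. Thus the sequence is eventually constantly equal to $(\gamma(t_1),\ldots,\gamma(t_6))$, which proves that this point is isolated in $R_S$.

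I do not anticipate a serious obstacle: the two rigidity inputs (isolation of points of $Z$, and finiteness of $\gamma \cap P$ from analyticity) combine cleanly once $R$ is partitioned according to which sub-triple witnesses membership, and the bound $n = 20$ falls out automatically from $\binom{6}{3}$. The only mild subtlety worth being careful about is that convergence in $Q_\gamma$ (convergence of the $t$-parameters) induces convergence of any sub-triple in $C_3(\R^3)$, which is immediate from continuity of $\gamma$, and that the injectivity of $\gamma$ on one period is used to transfer equality of points back to equality of parameters.
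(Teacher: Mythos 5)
Your proposal is correct and follows essentially the same route as the paper: the same decomposition of $R$ into the $\binom{6}{3}=20$ sets indexed by which sub-triple lies in $Z$, with isolation in each piece deduced from the isolation of points of $Z$ together with the fact that the analytic curve meets any plane in only finitely many points. You merely spell out the convergence details (eventual constancy of the sub-triple, then of the remaining parameters via finiteness of $\gamma^{-1}(P)$, using affine independence of the $Z$-triple to pin down the plane) that the paper's proof leaves implicit.
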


\begin{proof}
Bijectively associate the numbers $1,...,20$ with the size three subsets of $\{1,...,6\}$. Then, let $R_i$, for $i = 1,...,20$, be the set of coplanar $6$-tuples of points on $\gamma$ for which the triple of points associated to the number $i$ is in $Z$. We see that $\bigcup_{i = 1}^{20}R_i = R$, so we only need to show that every point of $R_i$ is an isolated point of $R_i$.  The curve $\gamma$ passes through every plane only finitely many times, and every point of $Z$ is isolated. This means that every point of $R_i$ must also be isolated. 
\end{proof}

Let $A$ be a subset of a manifold. We write $r(A)$ to denote the Cantor-Bendixson derivative of $A$. \cite{sets} That is to say, the subset of $A$ consisting of all non-isolated points. We write $r^n(A)$ to denote iteration of $r$. 

\begin{prp}\label{removal}
Suppose $X$ and $Y$ are closed subsets of a manifold $M$ so that $X\subseteq Y$ and there exist sets $R_1,...,R_n$ so that $Y\setminus X = \bigcup_{i = 1}^n R_i$ and each point of $R_i$ is an isolated point of the set $R_i$. Then we have $r^n(Y) \subseteq X$. 
\end{prp}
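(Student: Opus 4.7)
The plan is to argue by contradiction via a pigeonhole argument on a tree of limit-point witnesses. Suppose there is some $y \in r^n(Y) \setminus X$; I want to extract from this a convergent sequence sitting entirely inside one $R_{i^*}$, contradicting the assumption that every point of $R_{i^*}$ is isolated in $R_{i^*}$. First I would recursively unfold the definition of $r^n$. Place $y$ at level $0$. For each node $z$ at level $k < n$, the fact that $z \in r^{n-k}(Y) = r(r^{n-k-1}(Y))$ supplies a sequence in $r^{n-k-1}(Y) \setminus \{z\}$ converging to $z$, which I take as the children of $z$ at level $k+1$. This produces a rooted tree with $n+1$ levels, whose level-$k$ nodes all lie in $r^{n-k}(Y) \subseteq Y$.

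Next, using that $X$ is closed and that the root satisfies $y \notin X$, I would pass, level by level and branch by branch, to tails of each convergent subsequence, so that every node of the tree lies in $Y \setminus X = \bigcup_{i=1}^n R_i$. Assign each node an index $i$ with the node in $R_i$. Then by iterated pigeonhole with a diagonal restriction I would extract a subtree in which all level-$k$ nodes share a common index $\sigma(k)$, giving a function $\sigma : \{0,1,\ldots,n\} \to \{1,\ldots,n\}$. The extraction is done layer by layer: having fixed $\sigma(0),\ldots,\sigma(k-1)$ and thinned the ancestor tuples accordingly, for each surviving ancestor tuple pigeonhole its children to make their index constant, then pigeonhole across ancestor tuples to force the same constant $\sigma(k)$; at every stage we keep infinitely many children at each surviving branch, so the convergence relations of the tree are preserved.

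Since $\sigma$ maps an $(n{+}1)$-element set into an $n$-element set it cannot be injective, so there exist $k_1 < k_2$ with $\sigma(k_1) = \sigma(k_2) = i^*$. Fix any level-$k_1$ node $z$ in the subtree; then $z \in R_{i^*}$. A short induction on $k_2 - k_1$, repeatedly using that the children of each node converge to it, shows that for every $\varepsilon > 0$ there is a level-$k_2$ descendant of $z$ at distance less than $\varepsilon$ from $z$ and distinct from $z$ (the distinctness follows from distinctness of each child from its parent, applied to the intermediate ancestor one level below $z$). Taking such descendants at distances less than $1/m$ yields a sequence in $R_{i^*} \setminus \{z\}$ converging to $z \in R_{i^*}$, contradicting the hypothesis that every point of $R_{i^*}$ is isolated in $R_{i^*}$.

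The main obstacle is the bookkeeping in the diagonal extraction that produces the constant-per-level indexing: one must thin the tree so that each level has a uniform index while simultaneously preserving, at every surviving branch, the convergence of children to their parent. Doing this strictly one level at a time, and always preserving infinitely many children per surviving branch, keeps the tree structure intact and makes the final pigeonhole on $\sigma$ immediate.
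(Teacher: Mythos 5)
Your strategy is sound and it does give the proposition, but it is genuinely different from the paper's argument, and noticeably heavier. The paper also argues by contradiction from a point $x\in r^n(Y)\setminus X$, but instead of unfolding the whole derived-set structure into a tree, it builds a single nested chain: a neighborhood $U_0\ni x$ disjoint from the closed set $X$ and meeting $R_{a_0}$ only in $x$, then inductively points $x_k\in r^{n-k}(Y)\setminus X$ chosen inside $U_{k-1}\setminus\{x_0,\dots,x_{k-1}\}$ together with shrinking neighborhoods $U_k$ satisfying $U_k\cap R_{a_k}=\{x_k\}$. Because each later point lies in every earlier $U_i$, the isolation hypothesis forces all the indices $a_0,\dots,a_n$ to be distinct, and the pigeonhole on $\{1,\dots,n\}$ ends the proof immediately. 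In other words, the paper uses the isolation of points of $R_i$ \emph{at every step} to ``use up'' an index, whereas you use it only once, at the very end, and pay for that with an infinite Ramsey-type uniformization over a tree of limit-point witnesses. Your route buys a conceptually clean statement (``two levels of the tree carry the same index, so some $R_{i^*}$ accumulates on itself''), at the cost of all the thinning bookkeeping; the paper's route needs no tree, no diagonal extraction, and no appeal to sequences or a metric, only nested punctured neighborhoods.

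The one place where your write-up is thinner than it should be is exactly the step you flag: ``pigeonhole across ancestor tuples to force the same constant $\sigma(k)$.'' If this cross-branch pigeonhole is performed in one global sweep over all level-$(k-1)$ nodes, discarding those whose (already uniformized) children carry the wrong index, you may leave some level-$(k-2)$ node with only finitely many surviving children, and after several sweeps it is not clear that any level-$k_1$ node retains the full infinite descendant structure your final convergence-and-distinctness argument requires. The assertion ``at every stage we keep infinitely many children at each surviving branch'' is the right invariant, but the mechanism as stated does not deliver it automatically; you need to organize the pigeonhole hierarchically, from level $k-1$ upward to the root (first make each node's children constant-index, then pigeonhole each parent's children on the induced value, then each grandparent's, and so on), so that only infinite subsets of each node's children are ever kept. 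With that cascade spelled out, your proof is complete; without it, the uniformization claim is an assertion rather than an argument, and it is precisely the complication the paper's nested-chain construction avoids.
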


\begin{proof}
We proceed by contradiction. Suppose there exists a point $x \in r^n(Y)\setminus X$. There must exist a number $a$ so that $x\in R_a$. Then, since $X$ is closed and every point of $R_a$ is isolated, there exists an open neighborhood $U$ around $x$ that does not intersect $X$ and has $U\cap R_a = \{x\}$.  We recursively define a sequence of triples $(x_0,U_0, a_0),...,(x_n,U_n, a_n)$.  For the base case, we set $x_0 = x$, $U_0 = U$, and $a_0 = a$.  Now, suppose we have triples $(x_0,U_0,a_0),...,(x_{k-1},U_{k-1},a_{k-1})$ such that $U_{0}\supseteq U_1 \supseteq ... \supseteq U_{k-1}$ are open sets, and for all $i\in \{0,...,k-1\}$ we have $x_i\in r^{n-i}(Y)\setminus X$, and $U_{i}\cap R_{a_i} = \{x_i\}$, and all numbers $a_0,...,a_{k-1}$ are distinct elements of $\{1,...,n\}$. Then, $x_{k-1}\in r^{n+1-k}(Y)\setminus X$ so as long as $k\leq n$ we can find points in $r^{n-k}(Y)\setminus X$ that lie inside any neighborhood of $x_{k-1}$. We can then select $x_k$ to be any point of $r^{n-k}(Y)\setminus X$ that lies inside of $U_{k-1}\setminus\{x_0,...,x_{k-1}\}$. We now select $a_k$ to be any number so that $x_k\in R_{a_k}$. Finally, we select $U_k$ to be any open neighborhood around $x_k$ which lies inside $U_{k-1}\setminus\{x_0,...,x_{k-1}\}$ and is sufficiently small that $U_k\cap R_{a_k} = \{x_k\}$, which can be done because every point of $R_{a_k}$ is isolated. We see that the triples $(x_0,U_0,a_0),...,(x_{k},U_{k},a_{k})$ are such that  $U_{0}\supseteq U_1 \supseteq ... \supseteq U_{k-1}$, and for all $i\in \{0,...,k\}$ we have $x_i\in r^{n-i}(Y)\setminus X$, and $U_{i}\cap R_{a_i} = \{x_i\}$. We also need to show that all numbers $a_0,...,a_{k}$ are distinct. For all $i \in \{1,...,k-1\}$, we observe that $x_k\in U_i$ and $x_k\neq x_i$, and since $R_{a_i}\cap U_i = \{x_i\}\not\ni x_k$ and $x_k\in R_{a_k}$ we have $a_k\neq a_i$. We already had $a_0,...,a_{k-1}$ distinct so we now have $a_0,...,a_k$ distinct. We can continue this induction until we have $(x_0,U_0,a_0),...,(x_{n},U_{n},a_{n})$ with the properties described above. In particular, we have constructed $n+1$ distinct numbers $a_0,...,a_n$, all of which lie in the set $\{1,...,n\}$. This is a contradiction. 
\end{proof}

Let $P$ be an oriented plane in $\R^3$ and let $x = \gamma(t)$ be a point at which $\gamma$ intersects $P$. We call $x$ a two-sided point if for every interval $I$ around $t$, the set $\gamma(I)$ containins points on both sides of $P$. We call $x$ a positive (resp. negative) one-sided point if there is an interval $I$ around $t$ so that $\gamma(I)$ contains points on the top (resp. bottom) side of $P$ but no points on the bottom (resp. top) side of $P$. Since $\gamma$ is analytic, every intersection point falls into one of these three classes because $\gamma(I)$ can never lie entirely inside $P$.

\begin{prp}\label{cases}
For some choice of $M'$ as in Proposition \ref{mprime}, there exists real numbers $ t_1 < ... < t_6 < t_1+1$ so that $(\gamma(t_1),...,\gamma(t_6))\in Q_\gamma \cap M'$ and such that one of the three following possibilities holds.
\begin{itemize}
\item[1)] The 6-tuple $(\gamma(t_1),...,\gamma(t_6))$ forms a trefoil knot.
\item[2)] The 6-tuple $(\gamma(t_1),...,\gamma(t_6))$ lies in some plane $P$, is a stereographic trefoil, and at most three of the points in the 6-tuple are one-sided intersection points in $P$.
\item[3)] The 6-tuple $(\gamma(t_1),...,\gamma(t_6))$ lies in some circle $C$ of finite radius, and if $P$ is the plane containing the circle $C$, at most two of the points in the 6-tuple are one-sided in $P$.
\end{itemize}
\end{prp}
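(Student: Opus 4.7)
The plan is to argue by contradiction: assume that no $M'$ as in Proposition \ref{mprime} admits a 6-tuple in $Q_\gamma \cap M'$ satisfying any of (1)--(3), and derive a contradiction using the nontriviality of $\mathfrak{K}(\gamma)$. First, I would fix a perturbation $M'$ as in Proposition \ref{mprime}, further adjusted so that $M'$ meets $Q_\gamma$ transversely (the isotopy freedom in Proposition \ref{inv} allows this; the thin locus $M_0 \cap Q_\gamma$ can be treated separately). Set $Y := Q_\gamma \cap M'$. Since $\mathfrak{K}(\gamma) \neq 0$, $Y$ is a non-empty closed 1-manifold without boundary in $Q_\gamma$, and in particular has no isolated points.

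Next, I would classify the points of $Y$. Each 6-tuple in $Y$ lies either in $M' \setminus M_0$ (a stereographic trefoil by Proposition \ref{mprime}) or in $M_0$ (a 6-tuple on some circle in $S^3$). Applying Proposition \ref{sphere} to the stereographic projection to $\R^3$, a stereographic trefoil is either a genuine spherical trefoil knot (realizing option (1) of the proposition) or a coplanar 6-tuple in some plane $P$. A 6-tuple in $M_0$ projects to either a 6-tuple on a circle of finite radius in $\R^3$ (a candidate for option (3)) or a collinear 6-tuple (a degenerate sub-case). Under the contradiction hypothesis, no point of $Y$ realizes (1), every coplanar stereographic trefoil in $Y$ has at least four one-sided points in its plane $P$, and every 6-tuple on a finite-radius circle has at least three one-sided points in the plane of the circle.

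The key claim is then that every such 6-tuple contains an affinely independent triple of points belonging to $Z$, so that $Y \subseteq R$ with $R$ as in Proposition \ref{rigid}. This is because a one-sided intersection is exactly a point at which $\gamma$ is tangent to $P$: three distinct points on a circle are automatically affinely independent, so $\geq 3$ one-sided points on a circle yield a triple in $Z$; and among $\geq 4$ one-sided coplanar points there is generically an affinely independent triple, again giving a triple in $Z$. A few degenerate sub-cases (collinear 6-tuples coming from $M_0$, four collinear one-sided points in the planar case) are themselves strongly constrained and can be absorbed into a slight enlargement of the covering $R_1,\dots,R_{20}$.

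Finally, applying Proposition \ref{removal} with $X = \emptyset$ and the covering $R_1,\dots,R_{20}$ of $Y$ yields $r^{20}(Y) = \emptyset$. But $Y$ is a 1-manifold with no isolated points, so $r(Y) = Y$ and hence $r^{20}(Y) = Y \neq \emptyset$, the desired contradiction. The main obstacle I anticipate is making the reduction $Y \subseteq R$ fully rigorous in the degenerate configurations, and handling possible non-transversality at $M_0 \cap Q_\gamma$ cleanly; both are rigidity phenomena that ought to fit into the Cantor--Bendixson framework of Propositions \ref{rigid} and \ref{removal} with a bit of additional bookkeeping.
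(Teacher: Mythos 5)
Your proposal runs into two genuine gaps, both at the points where the paper has to work hardest. First, you cannot simply ``further adjust'' the $M'$ of Proposition \ref{mprime} to be transverse to $Q_\gamma$: the whole value of $M'$ is that every point of $M'\setminus M_0$ is a stereographic trefoil (in particular cospherical), and a generic transversality perturbation destroys exactly this property near the degenerate (coplanar) configurations, which is where the danger lies. The paper explicitly concedes that transversality cannot be guaranteed for this particular $M'$, and replaces it by the weaker statement that $Q_\gamma\cap M'$ is compact and that the map $H^1(Q_\gamma)\to \widecheck{H}^1(Q_\gamma\cap M')$ is nontrivial (an essential loop in every neighborhood of the intersection). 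Your final contradiction --- ``$Y$ is a nonempty closed $1$-manifold, hence has no isolated points, hence $r(Y)=Y$'' --- rests entirely on the transversality you have not secured, so the argument collapses at its last step.

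Second, and more seriously, the inclusion $Y\subseteq R$ is false as stated and cannot be repaired by ``a slight enlargement of the covering.'' Points of $Q_\gamma\cap M_0$ whose circle passes through the stereographic projection point project to \emph{collinear} $6$-tuples; a collinear tuple contains no affinely independent triple, so it lies in none of the $R_i$, and there is no rigidity statement analogous to Proposition \ref{rigid} for collinear configurations --- secant lines meeting the knot in many points can move in positive-dimensional families, so these tuples need not be isolated in any finite covering. This collinear locus $L$ is precisely what forces the paper's different endgame: Propositions \ref{rigid} and \ref{removal} are applied with $X=Q_\gamma\cap M'\cap L$ (not $X=\emptyset$), giving only $r^n(Q_\gamma\cap M')\subseteq L$; then, since Cantor--Bendixson derivation removes only isolated points and hence cannot kill the essential class in \v{C}ech $H^1$, one extracts a connected subset of $L$ still carrying an essential loop, observes that the linear order of the tuple (modulo reversal) is constant on it, and derives a contradiction because the essential loop forces a non-endpoint coordinate to pass through an extremal point of the knot's convex hull, which cannot lie in the interior of any chord. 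Your proposal contains no substitute for this mechanism, and the classification step also leans on an unproved ``generically'' (the paper instead rules out four collinear one-sided points by noting that a stereographic trefoil can contain no four concircular points, lines in $\R^3$ being circles on $S^3$). The first half of your plan --- the case analysis of $M'\setminus M_0$ versus $M_0$ and the identification of one-sided points with tangency, feeding into Propositions \ref{rigid} and \ref{removal} --- does match the paper; it is the treatment of non-transversality and of the collinear stratum that is missing.
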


\begin{proof}
By making $M'$ sufficiently close to $M$, we can guarantee that for arbitrarily small perturbations of $M'$, the intersection with $Q_\gamma$ is a 1-manifold with orientation class mapping to $\mathfrak{K}(\gamma)\in H_1(Q_\gamma)$. This means that, although we cannot obviously guarantee that $M'$ and $Q_\gamma$ intersect transversely, we can guarantee that $Q_\gamma\cap M'$ is compact and any neighborhood of $Q_\gamma\cap M'$ contains a loop which is essential in in the space $Q_\gamma$. That is to say, the induced map from cohomology to \v{C}ech cohomology  $ \Z \simeq H^1(Q_\gamma) \to \widecheck{H}^1(Q_\gamma \cap M')$ is nontrivial. We now proceed by contradiction to prove the proposition, so suppose no point satisfying 1, 2, or 3 exists. 

Let $L$ be the subset of $Q_\gamma \cap M'$ consisting of collinear 6-tuples. We claim that for some $n$, we have $$r^n(Q_\gamma \cap M') \subseteq L$$ First, observe that every point in $Q_\gamma\cap M'$ must be coplanar in $\R^3$ because there cannot be any trefoils on $\gamma$. Second, we see that in a stereographic trefoil, there are never four points that lie on a circle because a 6-tuple trefoil in $\R^3$ can never have four coplanar points. This means that any 6-tuple in $Q_\gamma\cap M'\setminus M_0$ lies in a plane $P$, has at least four one-sided points, and these points do not lie on a line. Furthermore, any 6-tuple in $Q_\gamma \cap M_0$ is either colinear, or all points lie on a circle of finite radius in a plane $P$ with at least three one-sided points. Any three points on a circle of finite radius fail to be colinear, so we have that every point of $Q_\gamma \cap M'$ is either a colinear 6-tuple or a coplanar 6-tuple lying on a plane $P$ with a triple of affinely independent one-sided intersection points. Now, from Propositions \ref{rigid} and \ref{removal}, we see that $$ r^n(Q_\gamma \cap M') \subseteq Q_\gamma \cap M' \cap L \subseteq L$$ for some $n$, which demonstrates that our claim is true.

Now, $r$ only removes isolated points and thus cannot affect \v{C}ech cohomology in degrees other than zero. Therefore, we may select a connected set  $X\subseteq r^n(Q_\gamma \cap M')$ such that $H^1(Q_\gamma) \to \widecheck{H}^1(X)$ is nontrivial. From what we have shown so far, $X$ consists only of colinear 6-tuples. Furthermore, $X$ is connected so we may conclude that the linear order (modulo reversal) on the terms of the 6-tuple is constant. This means that there are fixed numbers $e_1,e_2$ from 1 to 6 so that for every 6-tuple $(x_1,...,x_6)\in X$, the points $x_{e_1}$ and $x_{e_2}$ are the endpoints of the minimal line segment containing the 6-tuple. Now, select some $x$ in the curve $\gamma$ which is an extremal point for the convex hull of the knot, and select some $m$ from 1 to 6 such that $m\neq e_1$ and $m\neq e_2$. Since $H^1(Q_\gamma) \to \widecheck{H}^1(X)$ is nontrivial, there is a 6-tuple $(x_1,...,x_6)\in X$ for which $x_m = x$, however no line segment in the convex hull of the knot has $x$ in its interior. This gives us our contradiction.

\end{proof}

\begin{prp}\label{case1}
If there exist real numbers $t_1 < ... < t_6 < t_1+1$ so that the 6-tuple $(\gamma(t_1),...,\gamma(t_6))$ lies in some plane $P$, is a stereographic trefoil, and at most three of the points are one-sided intersection points in $P$, then there exist real numbers $t_1' < ... < t_6' < t_1'+1$ so that $(\gamma(t_1'),...,\gamma(t_6'))$ gives a trefoil knot.
\end{prp}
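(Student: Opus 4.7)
The plan is to construct $t_i' = t_i + \epsilon_i$ for small real perturbations $\epsilon_i$, chosen so that the perturbed 6-tuple lifts off the plane $P$ into a non-degenerate trefoil. The knot type of the polygonal path is controlled by the planar shadow --- essentially the original coplanar hexagon --- together with the signed heights $h_i$ of the six points above $P$, so the task reduces to realizing an appropriate sign pattern on $(h_1, \ldots, h_6)$ while keeping the cyclic order $t_1' < \cdots < t_6'$ intact.

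First I would use analyticity to track how $h_i$ depends on $\epsilon_i$: writing the Taylor series $h_i(\epsilon_i) = c_n \epsilon_i^n + O(\epsilon_i^{n+1})$ with $c_n \neq 0$, the sign of $h_i(\epsilon_i)$ for small $\epsilon_i$ is $\operatorname{sgn}(c_n \epsilon_i^n)$. At a two-sided point $h_i$ must change sign near $\epsilon_i = 0$, forcing $n$ odd; then $\operatorname{sgn}(h_i)$ is selectable freely by the sign of $\epsilon_i$. At a one-sided point $h_i$ does not change sign, forcing $n$ even, so $\operatorname{sgn}(h_i) = \operatorname{sgn}(c_n)$ is locked to the forced side. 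Under the hypothesis, at most three of the six heights are locked, so at least three are entirely free.

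Next, I would characterize the trefoil region. The planar shadow of the stereographic trefoil has three transverse self-intersections; at each of these crossings, the over/under resolution is determined by the sign of a linear form $L_k(h_1, \ldots, h_6)$ giving the height difference between the two strands at the crossing point (an affine interpolation of the four incident endpoint heights). The 6-tuple forms a trefoil precisely when $\operatorname{sgn}(L_1) = \operatorname{sgn}(L_2) = \operatorname{sgn}(L_3)$, cutting out two open cones in $\R^6$; call their union $\mathcal{T}$.

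The combinatorial heart of the argument is verifying that any assignment of at most three forced signs extends to a point of $\mathcal{T}$. This reduces to non-singularity of the $3\times 3$ submatrix of $(L_k)$-coefficients on the three free columns. I would leverage the specific incidence pattern of a 6-vertex trefoil diagram: each vertex is an endpoint of two edges, each edge lies in exactly one of three crossings, so each vertex appears in exactly two of the three crossings, with the ``missing'' pattern pairing $i$ with $i+3$ --- exactly the antipodal pairs coming from the three lines of $M$. This structure makes each candidate $3 \times 3$ minor a non-trivial polynomial in the crossing weights, and the main obstacle is ruling out coincidental vanishing of such a minor. I expect to handle this by using the stereographic trefoil hypothesis to force the three crossing points into generic position along their respective edges, supplemented if needed by a small auxiliary wiggle of the perturbation (preserving cyclic order) to eliminate any residual degeneracy. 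Once a valid sign pattern is identified, back-solving for the $\epsilon_i$'s --- matching the required sign at two-sided points and taking any small magnitude at one-sided points --- produces the desired trefoil 6-tuple.
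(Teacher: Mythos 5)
Your proposal diverges from the paper's proof and, as written, has a genuine gap at its foundation: you assume that the planar shadow of the coplanar 6-tuple is a regular trefoil diagram with exactly three transverse crossings, that being a trefoil is equivalent to three crossing-sign conditions $\operatorname{sgn}(L_1)=\operatorname{sgn}(L_2)=\operatorname{sgn}(L_3)$, and that the incidence pattern is the prism pattern ``pairing $i$ with $i+3$ from the three lines of $M$.'' None of this follows from the hypotheses. The proposition assumes only that the coplanar tuple is a stereographic trefoil with at most three one-sided points; membership in $M$ or $M'$ is not part of the statement, so the antipodal-pair structure is not available. Moreover, a coplanar limit of hexagonal trefoils need not project to the minimal 3-crossing diagram: a closed hexagon in the plane can have more than three crossings, and degenerate incidences (a vertex lying on a non-adjacent edge, triple points) are not excluded by the hypotheses. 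In those cases your finite list of linear forms, and hence the whole ``sign pattern'' framework, changes or breaks; note also that varying $t_i$ moves the vertex horizontally, so the shadow itself moves, which is only harmless once regularity of the shadow has been established --- and that is precisely what is missing.

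Even granting the standard 3-crossing shadow, the combinatorial heart of your argument --- that any assignment of at most three forced signs extends into the trefoil cone --- is not proved, and the specific mechanism you propose (invertibility of the $3\times 3$ minor on the free heights) can genuinely fail: with free vertices $x_2,x_4,x_6$ and crossing parameters $\lambda_k,\mu_k$, that minor equals $(1-\lambda_1)(1-\mu_2)(1-\lambda_3)-\mu_1\lambda_2\mu_3$, which vanishes, for instance, when every crossing sits at the midpoints of its two edges; an ``auxiliary wiggle'' cannot repair this, since the crossing parameters are dictated by where $\gamma$ meets $P$. (In that particular example the cone is still reachable, but showing this in general requires a case analysis over locked/free distributions and shadows that you have not supplied.) The paper's proof sidesteps all of this: since trefoils form an open set, stereographic trefoils are open among cospherical 6-tuples; one chooses a circle $C\subset P$ separating the positive from the negative one-sided points --- possible exactly because there are at most three one-sided points, which is how the hypothesis is actually used --- and intersects $\gamma$ with spheres of very large radius through $C$ that bulge to the correct side near each point. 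This yields non-coplanar cospherical 6-tuples on $\gamma$ arbitrarily close to the original one, which are stereographic trefoils by openness and hence honest trefoils by Proposition \ref{sphere}. If you want to pursue your height-perturbation route, you would need to first rule out (or handle) non-regular and higher-crossing shadows and then prove the sign-extension lemma without relying on minor non-singularity.
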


\begin{proof}
First, notice that the set of trefoils is open. This implies that the set of stereographic trefoils is an open subset of the set of cospherical 6-tuples of points. This tells us that if there exist non-coplanar yet cospherical 6-tuples $(\gamma(t_1'),...,\gamma(t_6'))$ arbitrarily close to  $(\gamma(t_1),...,\gamma(t_6))$, then the proposition will be true. 

To construct such arbitrarily close 6-tuples, choose an orientation for $P$ and draw a circle $C$ in $P$ that separates the positive one-sided points from the negative one-sided points. Such a circle exists because there are only at most three one-sided points. Now, select spheres of arbitrarily large radius that pass through $C$ in such a way that the sphere is above $P$ in the part with the positive one-sided points and below $P$ in the part with the negative one sided points. If we look at the intersection of these spheres with $\gamma$, for sufficiently large radii, there will be choices of intersection points arbitrarily close to $(\gamma(t_1),...,\gamma(t_6))$ because our sphere agrees with the sidedness of all of the points. This gives us the desired $(\gamma(t_1'),...,\gamma(t_6'))$.
\end{proof}

\begin{prp}\label{case2}
If we have $t_1 < ... < t_6 < t_1 + 1$ so that $(\gamma(t_1),...,\gamma(t_6))\in Q_\gamma \cap M_0$ and $(\gamma(t_1),...,\gamma(t_6))$ lies on a circle $C$ of finite radius in such a way that if $P$ is the plane containing $C$ then $P$ has at most two one-sided points from the 6-tuple, then there exist $t_1' < ... < t_6' < t_1'+1$ so that $(\gamma(t_1'),...,\gamma(t_6'))$ gives a trefoil knot.
\end{prp}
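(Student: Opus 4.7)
The plan is to adapt the strategy of Proposition~\ref{case1}: produce nearby cospherical 6-tuples on $\gamma$ and identify them as stereographic trefoils, then invoke Proposition~\ref{sphere} to conclude they are honest trefoils. The key new ingredient, compared with~\ref{case1}, is that our 6-tuple $x_0 = (\gamma(t_1),\dots,\gamma(t_6))$ is cocircular rather than already a stereographic trefoil, so openness of the stereographic-trefoil condition cannot be invoked directly; instead we use the $M_0$-structure together with an isolation argument to cross out of $M_0$ in the correct direction.

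First, I would argue that $x_0$ is isolated in $M_0 \cap Q_\gamma$. By analyticity of $\gamma$ and the same finiteness arguments used in Proposition~\ref{rigid}, the curve meets any fixed plane in only finitely many points, so cocircular 6-tuples on $\gamma$ form a discrete set. Since $M \cap Q_\gamma$ has expected dimension $13 + 6 - 18 = 1$, the one-dimensional local structure of $M \cap Q_\gamma$ through $x_0$ must immediately exit $M_0$: nearby points on it are cospherical 6-tuples on $\gamma$ lying in $M \setminus M_0$. Invoking Proposition~\ref{mprime} with $M'$ taken sufficiently close to $M$, such points of $M \setminus M_0$ near $M_0$ are arbitrarily close to points of $M' \setminus M_0$, which are stereographic trefoils; by the openness of the stereographic-trefoil condition among cospherical 6-tuples --- the same input used in Proposition~\ref{case1} --- these nearby 6-tuples are themselves stereographic trefoils. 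They are also non-cocircular, hence generically non-coplanar in $\R^3$, since non-coplanarity corresponds to the associated 2-sphere in $S^3$ avoiding the stereographic projection point, an open generic condition.

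Applying Proposition~\ref{sphere} with the identity M\"obius transformation, a non-coplanar stereographic trefoil in $\R^3$ is a spherical trefoil, so cyclically connecting its points yields a trefoil knot; the cyclic order on $\gamma$ is preserved under sufficiently small perturbations, giving the desired $t_1' < \dots < t_6'$. The main obstacle is verifying the one-dimensional local structure of $M \cap Q_\gamma$ near $x_0$: $M$ and $Q_\gamma$ need not be transverse at $x_0$, so one must work with a small transverse perturbation of $M$ (as in Propositions~\ref{inv} and~\ref{cases}) while preserving the isolation-based conclusion that the resulting $1$-manifold exits into $M \setminus M_0$ arbitrarily close to $x_0$. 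If, in the rare event, this perturbation produces only coplanar stereographic trefoils nearby (corresponding to 2-spheres in $S^3$ that happen to pass through the projection point), then those coplanar 6-tuples satisfy the hypothesis of Proposition~\ref{case1} --- the at-most-two one-sided condition persists by continuity from the at-most-two hypothesis on $x_0$ --- and we reduce to that proposition.
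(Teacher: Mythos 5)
Your proposal has a genuine gap, and it occurs at the central step. You claim that points of $M\setminus M_0$ lying near $M_0$ on $Q_\gamma$ ``are arbitrarily close to points of $M'\setminus M_0$, which are stereographic trefoils; by the openness of the stereographic-trefoil condition these nearby 6-tuples are themselves stereographic trefoils.'' Openness runs in the wrong direction here. A generic point of $M\setminus M_0$ is the vertex set of a triangular prism inscribed in a sphere, connected cyclically through the complement graph; the two hexagon edges lying in each quadrilateral face are the diagonals of that face, so the hexagon has three genuine self-crossings. In other words, points of $M\setminus M_0$ are \emph{never} stereographic trefoils --- they sit exactly on the degenerate locus between trefoil and unknot, which is precisely why Proposition \ref{mprime} has to perturb $M$ off itself (rotating the top of the prism) to manufacture trefoils. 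Being within a shrinking distance of $M'$ does not transfer the trefoil property to a configuration that lies on the singular locus, so even if you produced cospherical 6-tuples of curve points in $M\setminus M_0$ near $x_0$, they would not be trefoils.

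The earlier step is also unsupported: the ``expected dimension $13+6-18=1$'' argument gives no actual local structure of $M\cap Q_\gamma$ at $x_0$ without transversality, and the hypothesis of the proposition supplies only the single point $x_0\in Q_\gamma\cap M_0$; a perturbation of $M$ as in Proposition \ref{inv} can move the intersection entirely away from $x_0$, losing the cocircularity and sidedness data you need. Note also that your argument never uses the at-most-two-one-sided-points hypothesis in an essential way, which is a sign the mechanism is missing. The paper's proof is a direct geometric construction that does use it: one first observes that there are vectors $v_1,\dots,v_6$ orthogonal to the plane $P$ of the circle such that $(\gamma(t_i)+\varepsilon v_i)$ is a trefoil for all $\varepsilon>0$ (Figure 3); the freedom to translate perpendicular to $P$ and reflect across a plane parallel to $P$, combined with having at most two one-sided points, lets one arrange that each $v_i$ points to the side on which $\gamma$ actually lies near any one-sided $\gamma(t_i)$. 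One then replaces each pushed point $x_i(\varepsilon)$ by a nearby genuine point of $\gamma$ in the plane parallel to $P$ through $x_i(\varepsilon)$, and finally applies an affine stretch $T_\varepsilon$ normal to $P$ so that the pushed configuration becomes a fixed trefoil and the curve-point configuration converges to it, whence openness of the set of trefoils (used in the correct direction, about a nondegenerate limit) finishes the argument. Some ingredient playing this role --- converting the degenerate cocircular configuration into nearby points of $\gamma$ forming an honest trefoil, using the sidedness hypothesis --- is what your proposal is missing.
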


\begin{figure}[h]
\caption{A element of $M_0$ on a circle is perturbed with vectors orthogonal to the plane of the circle to produce a trefoil.}
\centering
\includegraphics[scale = 1]{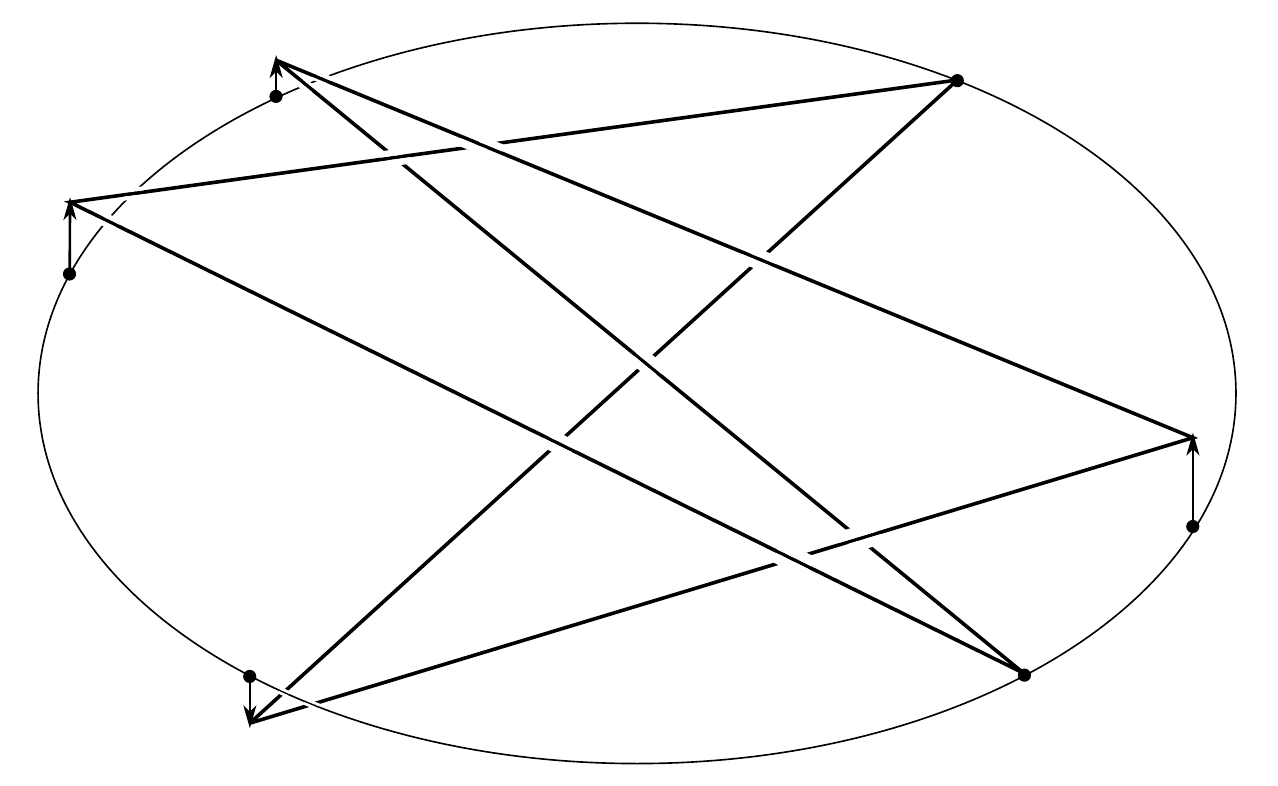}
\end{figure}

\begin{proof}
First, observe that there exists a $6$-tuple of vectors $(v_1,...,v_6)$ orthogonal to $P$ so that for every $\varepsilon > 0$ the 6-tuple $(x_1(\varepsilon),...,x_6(\varepsilon)) = (\gamma(t_1) + \varepsilon v_1,...,\gamma(t_6) + \varepsilon v_6)$ gives a trefoil. See Figure 3 for an example. Such 6-tuples of vectors are preserved under translation perpandicular to $P$ and reflection across a vector subspace paralell to $P$. Therefore, we can assume without loss of generality that $v_i$ points towards the same side as the sidedness of $\gamma(t_i) $ whenever $\gamma(t_i) $ is a one-sided point. (We can translate and reflect to get this condition on both one-sided points.) Now, we create $(\gamma(t_1'(\varepsilon)),...,\gamma(t_6'(\varepsilon)))$ by selecting $\gamma(t_i'(\varepsilon))$ to be the closest point on $\gamma$ to $x_i(\varepsilon)$ that lies in the plane paralell to $P$ that contains $x_i(\varepsilon)$.  By our sidedness assumption, this will always be possible for sufficiently small $\varepsilon$. Now, we claim that for sufficiently small $\varepsilon$, the 6-tuple $(\gamma(t_1'(\varepsilon)),...,\gamma(t_6'(\varepsilon)))$ is a trefoil. 

To see that this is true, consider an affine transformation that fixes $P$ and stretches the component orthogonal to $P$. We may select such a transformation $T_\varepsilon$ depending on $\varepsilon$ so that the 6-tuple $(T_\varepsilon x_1(\varepsilon), ... ,T_\varepsilon x_6(\varepsilon))$ does not depend on $\varepsilon$. Furthermore, since $T_\varepsilon$ does not modify the component of vectors paralell to $P$, we have that $(T_\varepsilon \gamma(t_1'(\varepsilon)),...,T_\varepsilon \gamma(t_6'(\varepsilon)))$ limits to the same trefoil as $(T_\varepsilon x_1(\varepsilon), ... ,T_\varepsilon x_6(\varepsilon))$. Since the set of trefoils is open, this implies that $(T_\varepsilon \gamma(t_1'(\varepsilon)),...,T_\varepsilon \gamma(t_6'(\varepsilon)))$, and thus $(\gamma(t_1'(\varepsilon)),...,\gamma(t_6'(\varepsilon)))$, is a trefoil for small $\varepsilon$.
\end{proof}

We now have a proof of Theorem 1. 

\begin{proof}[Proof of Theorem 1]
Combine propositions \ref{cases}, \ref{case1}, and \ref{case2}.
\end{proof}

\section{Questions}

The theorem we have presented in this paper bears a strong resemblance to a theorem from  \cite{linking} in which it is shown that if a knot has a nontrivial quadratic term of its Conway Polynomial, then it has an alternating quadrisecant. Indeed, alternating quadrisecants are very close to being inscribed trefoil knots. To see why this is the case, if one doubles up the middle two points in an alternating quadrisecant, then there will be a small perturbation of this configuration of points in $\R^3$ that creates a very skinny trefoil knot. It seems quite likely that there is an alternate proof of Theorem 1 that comes from a careful analysis of a knot in the neighborhood of an alternating quadrisecant. We therefore make the following conjecture which would connect these two results in the most obvious way possible.

\begin{cnj}
If $\gamma: \R\to \R^3$ is an analytic $\Z$-periodic parameterization with non vanishing derivative for a knot with nontrivial quadratic term of its Conway polynomial, then for every $\varepsilon > 0$, there exist two sequences of real numbers $t_1 < t_2 < ... < t_6 < t_1 + 1$ and $s_1 < ... < s_4 < s_1 + 1$ so that the points $\gamma(t_1) , ... , \gamma(t_6)$ form a trefoil knot when connected cyclically by line segments, the points $\gamma(s_1),\gamma(s_3),\gamma(s_2),\gamma(s_4)$ lie in order on some line, and the distances $\min_{j = 1,2,3,4} |\gamma(t_i) - \gamma(s_j)|$ for $i = 1,...,6$ are all less than $\varepsilon$. 
\end{cnj}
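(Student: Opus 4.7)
The plan is to bootstrap from the existence of an alternating quadrisecant, established for knots with nontrivial quadratic term of the Conway polynomial by Denne, Diao, and Kusner \cite{linking}, and to build the desired 6-tuple as a skinny perturbation of the quadrisecant configuration. Let $L$ be such a quadrisecant, meeting $\gamma$ at parameters $s_1 < s_2 < s_3 < s_4 < s_1 + 1$ whose linear order along $L$ is $\gamma(s_1), \gamma(s_3), \gamma(s_2), \gamma(s_4)$. By passing to a nearby alternating quadrisecant if necessary, I would assume that $\gamma'(s_2)$ and $\gamma'(s_3)$ are not parallel to $L$.

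First I would place $t_1 = s_1$, $t_6 = s_4$, and, for small positive numbers $\delta_2, \delta_2', \delta_3, \delta_3'$, set $t_2 = s_2 - \delta_2$, $t_3 = s_2 + \delta_2'$, $t_4 = s_3 - \delta_3$, $t_5 = s_3 + \delta_3'$. As the four $\delta$'s tend to zero, the polygonal path collapses onto $L$: the short edges $[\gamma(t_2),\gamma(t_3)]$ and $[\gamma(t_4),\gamma(t_5)]$ shrink to the points $\gamma(s_2)$ and $\gamma(s_3)$, while the remaining four long edges lie along $L$ and traverse it back and forth in the pattern dictated by the alternating order. To first order in the $\delta$'s, the short edges emerge from this limit in the tangent directions $\gamma'(s_2)$ and $\gamma'(s_3)$, which by assumption are transverse to $L$. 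This is a precise realization of the \emph{doubling up the middle two points of an alternating quadrisecant} picture mentioned in the last section.

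The geometric heart of the argument is the claim that, for an appropriate choice of signs and magnitudes of the four $\delta$'s, the resulting 6-gon is a trefoil. I would verify this by projecting the perturbed configuration onto a plane transverse to $L$ and computing a crossing diagram: the four long edges project to a nearly collinear zigzag of length $O(1)$, while the two short edges project to segments of length $O(\delta)$ transverse to that zigzag. Tracking the cyclic order and the sidedness of the tangents $\gamma'(s_2), \gamma'(s_3)$ relative to $L$, one should obtain exactly three crossings arranged in the pattern of a standard trefoil diagram. Once the knot type is established, the $\varepsilon$-closeness requirement is automatic, since each $t_i$ lies within $\max_j \delta_j$ of some $s_j$ in parameter space and $\gamma$ is continuous.

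The hardest step, I expect, is the rigorous crossing count itself: because the degenerate configuration is singular, one cannot invoke isotopy invariance through the collinear limit, so the trefoil type must be certified directly from the combinatorics of the perturbed diagram. A secondary obstacle is the coherent choice among the sixteen sign patterns for the $\delta$'s, since only some will yield trefoils while others produce unknots; identifying the correct subset is likely to depend on the handedness of the quadrisecant, which is in turn encoded by the sign data in the linking-number argument of \cite{linking}. One must also check that the preferred signs can always be realized by a valid cyclic sequence $t_2 < t_3$ and $t_4 < t_5$ on $\gamma$, but since both signs of each $\delta$ remain available after a possible reparametrization, this last compatibility should not be a fundamental difficulty.
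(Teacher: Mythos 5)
This statement is posed as a conjecture in the paper; the authors offer only the heuristic that ``doubling up the middle two points of an alternating quadrisecant'' should yield a skinny inscribed trefoil, and they explicitly do not claim a proof. Your proposal is precisely an attempt to carry out that heuristic, and you have correctly reduced it to the right question, but the crucial step is exactly the one you defer: showing that the thin hexagon obtained by splitting $s_2$ and $s_3$ is actually a trefoil. This is a genuine gap, and there are concrete reasons it is not a routine crossing count. First, you have essentially no sign freedom: since $t_2<t_3$ and $t_4<t_5$, the two short edges necessarily point (to first order) in the directions $+\gamma'(s_2)$ and $+\gamma'(s_3)$; reversing the parameterization flips both directions simultaneously and also exchanges the roles of $s_1,s_4$ and $s_2,s_3$, so the ``sixteen sign patterns'' you hope to choose among are not available. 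The transverse directions of the short edges are therefore dictated by the curve, not by you, and when the transverse components of $\gamma'(s_2)$ and $\gamma'(s_3)$ are parallel (or a tangent is parallel to $L$) the first-order configuration is planar and determines no knot type at all. Second, with $t_1=s_1$ and $t_6=s_4$ the long edge $[\gamma(s_4),\gamma(s_1)]$ lies exactly on $L$ and passes through the points of $L$ that the two short edges approach within $O(\delta^2)$; whether each short edge passes over or under this edge is governed by second-order (curvature) data of $\gamma$ at $s_2,s_3$, which again you cannot choose and which your first-order analysis cannot see. So it is not established that some choice of the $\delta$'s yields a trefoil rather than an unknot, and this is the heart of the problem rather than a technicality.

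Two smaller points: the reduction ``by passing to a nearby alternating quadrisecant if necessary'' to make $\gamma'(s_2),\gamma'(s_3)$ transverse to $L$ is unjustified, since alternating quadrisecants of a given curve can be isolated and need not admit such perturbations; and if the construction is to work at all, it will likely need the quadrisecant to be \emph{essential} in the sense of Denne--Diao--Kusner, whose homotopical nontriviality of the arcs relative to the secant line is the kind of input that could force the needed crossing pattern. As written, the proposal is a plausible program --- the same one the paper itself suggests --- but not a proof.
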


In \cite{alternating}, Elizabeth Denne showed that all nontrivial knots have alternating quadrisecants. Therefore, if there is indeed a connection between alternating quadrisecants and inscribed trefoils, it may be possible to drop the assumption of nontriviality of the quadratic term of the conway polynomial from Theorem 1, replacing it with the nontriviality of the knot.  

It is not easy to generalize these results to arbitrary smooth parameterizations, let alone continuous parameterizations. We therefore ask, all other conditions from Theorem 1 unchanged, what degree of regularity is required from $\gamma$ to obtain the result?  

A corollary of our main theorem is that every analytically parameterized trefoil knot has an inscribed trefoil knot. However, our proof gives no control over the handedness of the inscribed trefoil. We therefore make the following conjecture. 

\begin{cnj}
Let $\gamma: \R\to \R^3$ be an analytic $\Z$-periodic function with non-vanishing derivative which parameterizes a right-handed trefoil. Then there exists a sequence of numbers $0\leq t_1 < t_2 < ... < t_6 < 1$ so that the polygonal path obtained by cyclically connecting the points $\gamma(t_1), \gamma(t_2), ..., \gamma(t_6)$ by line segments is also a right-handed trefoil. 
\end{cnj}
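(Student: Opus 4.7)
The plan is to refine the invariant $\mathfrak{K}$ so that it separates inscribed trefoils by handedness. The obstruction to a direct application of Theorem 1 is that $\mathfrak{K}(\gamma)$ equals a fixed multiple of the quadratic Conway coefficient, and the Conway polynomial does not distinguish a knot from its mirror image, so the raw intersection count is insensitive to chirality. A handedness-aware refinement of the moduli space $M$ is therefore essential.

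The first step is to observe that in Proposition \ref{mprime} the perturbation direction at every point of $M\setminus M_0$ is canonically determined by the orientation of the circle $C = T_p\cap P$, which in turn is induced by the ordering of the three lines together with the specified side of the partition by $T_p$. The handedness of the produced stereographic trefoil is therefore a locally constant function on $M\setminus M_0$, and we may write $M\setminus M_0 = M_R\sqcup M_L$ accordingly. The deck transformation of Proposition \ref{diffeoM} reverses the cyclic order but preserves handedness (since reversing the parametrization of a trefoil produces an isotopic, hence equi-handed, knot), so this decomposition descends to the base of the double cover. Using the thickness bound of Proposition \ref{bound}, the constructions of Propositions \ref{inv}, \ref{cases}, \ref{case1}, and \ref{case2} can then be replayed with $M_R$ in place of $M$ to yield a refined class $\mathfrak{K}_R(\gamma)$, subject to the boundary care noted below.

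The second step is to compute this refined count on the standard right-handed trefoil $\gamma(t) = \frac{1}{\sqrt{2}}(\cos 4\pi t, \sin 4\pi t, \cos 6\pi t, \sin 6\pi t)$. The $(\Z/2\Z)^2$ argument already in the paper shows that the symmetric hexagonal configuration $(\gamma(0), \gamma(1/6), \ldots, \gamma(5/6))$ contributes an odd mod-$2$ count to $\mathfrak{K}$. A direct computation — infinitesimally rotating the top triangle of the inscribed prism according to the canonical orientation of $C$ and comparing with the standard right-handed orientation of the Clifford torus — should place this fixed configuration inside the closure of $M_R$. Since the involutions $\tau_1,\tau_2$ act by rotations of $\R^4$ and hence preserve the handedness splitting, the parity argument passes to $\mathfrak{K}_R$, yielding $\mathfrak{K}_R(\gamma)\neq 0$ for the standard right trefoil and hence for every analytic right-handed trefoil by the isotopy invariance of the refined class.

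The main obstacle is that $M_R$ and $M_L$ meet along the cocircular locus $M_0$, which is a genuine codimension-one shared boundary rather than a boundary at infinity. An intersection in $Q_\gamma\cap M'$ lying near $M_0$ could be resolved into either component by a generic perturbation, so $\mathfrak{K}_R$ is only a priori well-defined up to transfers across this boundary. Overcoming this will require either (i) refining the case analysis so that the degenerate cases of Propositions \ref{case1} and \ref{case2} are always resolved into $M_R$ when $\gamma$ is a right trefoil, by choosing the normal vectors $v_i$ or the auxiliary sphere's curvature sign to match the knot's handedness, or (ii) showing that any contribution from $M_0$ cancels in pairs under some natural involution, so that the mod-$2$ parity argument of the previous paragraph remains valid. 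Making this boundary ambiguity rigorous is, I expect, where essentially all of the new technical work will lie; the remainder of the machinery then carries over from the body of the paper.
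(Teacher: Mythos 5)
This statement is not proved in the paper at all: it is stated as Conjecture 2 precisely because, as the author notes, the machinery of $\mathfrak{K}$ gives no control over handedness. Your proposal is therefore a research program rather than a proof, and you yourself flag the decisive gap (well-definedness of $\mathfrak{K}_R$), so it cannot be accepted as a proof of the conjecture. Beyond the admitted gap, the proposed splitting $M\setminus M_0 = M_R\sqcup M_L$ does not exist in the form you describe. First, $M_0$ is not a codimension-one wall: the cocircular condition is that the three lines through $p$ are coplanar, which is codimension two in $M$, so $M\setminus M_0$ is connected; a genuinely locally constant handedness labeling on it would be globally constant, which is impossible since mirror-image configurations both occur. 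The resolution is that handedness of a \emph{stereographic} trefoil is only defined up to mirror image --- this is exactly the caveat in Proposition \ref{sphere} (``the handedness might be different''), because different stereographic projections of the same cospherical 6-tuple can yield trefoils of opposite chirality. If you instead fix one projection (the one in which $\gamma$ lives), handedness becomes defined only away from the codimension-one locus of configurations whose spanning 2-sphere passes through the projection point (coplanar 6-tuples in $\R^3$), and it flips across that wall. It is across this wall, not across $M_0$, that intersection points transfer between your would-be $M_R$ and $M_L$, and nothing in the cobordism argument of Proposition \ref{inv} prevents the isotopy traces from crossing it; so the refined class is not an isotopy invariant as constructed.

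There is also a structural obstruction you should confront: if your refined count still satisfies the degree-2 Vassiliev relation (and the overlap/cancellation argument in the paper's rank-2 proof is insensitive to any handedness decoration, so it would apply to each piece unless the wall-crossing spoils it), then it is a degree-2 finite-type invariant, and all such invariants take equal values on a knot and its mirror; chirality detection first appears at degree 3. So a successful version of your plan must explain why $\mathfrak{K}_R$ fails to be finite-type of degree 2 --- presumably exactly because of the wall-crossing corrections --- and must then re-establish invariance by some mechanism other than the paper's, e.g.\ a counting formula with explicit correction terms at the coplanar wall, or a computation pinning down the degree-3 part. Finally, the assertion that the symmetric configuration $(\gamma(0),\dots,\gamma(5/6))$ lies on the right-handed side of your decomposition is only asserted (``should place''), and the claim that $\tau_1,\tau_2$ preserve the splitting presupposes the splitting exists; both would need genuine proofs. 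As it stands, the proposal identifies the right difficulty but does not overcome it.
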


Finally, are there nontrivial knot types $K_1$ and $K_2$, and some $n\geq 7$, such that every analytic parameterization of a knot of type $K_1$ has $n$ points on it that when connected cyclically in order by line segments yield a knot of type $K_2$? 

\nocite{*}

\bibliography{Refrences}{}
\bibliographystyle{plain}

\end{document}